\documentclass[12pt]{amsart}

\usepackage{enumerate}
\usepackage{amssymb,amsfonts,amsmath,amsthm,mathrsfs}
\usepackage{dsfont}
\usepackage{graphicx}
\usepackage{xcolor}

\usepackage[colorlinks=true,
linkcolor=blue,
citecolor=blue,
urlcolor=blue]{hyperref}

\usepackage{setspace}
\usepackage{geometry}
\numberwithin{equation}{section}

\theoremstyle{plain}
\newtheorem{theorem}{Theorem}[section]
\newtheorem{proposition}[theorem]{Proposition}
\newtheorem{corollary}[theorem]{Corollary}

\theoremstyle{definition}
\newtheorem{definition}[theorem]{Definition}
\newtheorem{example}[theorem]{Example}
\newtheorem{remark}[theorem]{Remark}

\DeclareMathOperator{\Rep}{Rep}

\renewcommand{\Re}{\operatorname{Re}}

\newcommand{\N}{{\mathbb{N}}}

\newcommand{\R}{\mathbb{R}}
\newcommand{\C}{\mathbb{C}}

\newcommand{\DD}{\mathscr{D}'}

\def\jp#1{{\left\langle{#1}\right\rangle}}

\title[Vekua-Type Operators on Compact Lie Groups]
{Vekua-Type Operators on Compact Lie Groups: \\ Hypoellipticity, Solvability, and Self-Duality}

\author[A. Kirilov]{Alexandre Kirilov}
\address{
	Departamento de Matem\'atica, Universidade Federal do Paran\'a,
	Caixa Postal 19081, Curitiba, 81531-990, PR, Brazil
}
\email{akirilov@ufpr.br}

\author[R. Paleari da Silva]{Ricardo Paleari da Silva}
\address{
	Colegiado de Matem\'atica, Unespar, Rua Comendador Correa Junior, 117,
	Paranagu\'a, 83203-560, PR, Brazil
}
\email{ricardo.paleari@unespar.edu.br}

\subjclass[2020]{Primary 35H10, 35A01; Secondary 35A02, 43A77, 30G20}

\keywords{Global hypoellipticity, global solvability, Vekua-type operators, compact Lie groups, Fourier analysis, three-dimensional sphere}

\begin{document}

	\begin{abstract}
		We study global hypoellipticity and global solvability for Vekua-type operators associated with diagonal left-invariant operators on compact Lie groups. The conjugation term produces, on the Fourier side, a family of coupled systems whose determinants govern both regularity and solvability. Under a natural non-self-duality assumption, we obtain complete Diophantine-type characterizations of global hypoellipticity and global solvability, the latter on the natural space of admissible data. We then show how the theory must be modified in the self-dual setting by carrying out a complete analysis of the model case \(\mathbb S^3\simeq SU(2)\), where conjugation acts inside each representation block. The resulting criteria exhibit two Fourier-side mechanisms for Vekua-type operators on compact Lie groups: coupling between distinct conjugate representations and coupling inside self-dual representation blocks. We also present examples on product groups illustrating how these mechanisms may coexist and how global solvability may hold even when global hypoellipticity fails.
	\end{abstract}
	
	\maketitle

	
	\maketitle 
	
	{
		\small 
		\tableofcontents 
	} 
	
	\section{Introduction}
	
	The purpose of this paper is to study global hypoellipticity and global
	solvability for Vekua-type operators on compact Lie groups. More precisely, we
	consider operators of the form
	\[
	Pu=Lu-q\,u-p\,\overline u,
	\]
	where \(L\) is a left-invariant operator on a compact Lie group \(G\),
	the constants \(p,q\in\C\), and \(p\neq0\). The presence of the conjugation term makes \(P\) a
	\(\R\)-linear operator, but in general not \(\C\)-linear. This simple change has a
	substantial effect on the Fourier analysis of the equation: the Fourier
	coefficient of \(u\) at a representation is no longer governed independently,
	but is coupled with the coefficient coming from the conjugate representation, or
	with a symmetric entry inside the same representation block when the
	representation is self-dual.
	
	The central problem is to determine when the equation
	\[
	Pu=f
	\]
	has a distributional solution, and when the regularity of \(Pu\) guarantees the
	regularity of \(u\). In the complex-linear setting, such questions are often
	reduced to lower bounds for scalar symbols or matrix symbols. For Vekua-type
	operators, the conjugation term produces \(2\times2\) systems on the Fourier
	side. The determinant of these systems is the main object of the paper: its
	zeros give the algebraic compatibility conditions for solvability, while its
	small nonzero values are the small divisors that control global regularity.
	
	We first develop this analysis for Vekua-type perturbations of diagonal
	left-invariant operators on compact Lie groups. The class of diagonal operators
	considered here includes many natural left-invariant differential operators and
	allows arbitrary order. Under a non-self-duality assumption on the relevant
	representation classes, the Fourier-side equation decouples into scalar
	\(2\times2\) systems involving a representation and its conjugate. We prove that
	a Diophantine-type lower bound for the associated determinants characterizes
	global hypoellipticity. We also prove the corresponding characterization of
	global solvability, where solvability is understood on the natural space of
	admissible data determined by the algebraic compatibility conditions at the
	zeros of the determinant.
	
	A second main point of the paper is that the self-dual case cannot be treated as
	a formal variant of the non-self-dual one. If a representation is equivalent to
	its conjugate, the conjugation term does not couple two distinct representation
	classes. Instead, it acts inside the same representation space. The basic model
	for this phenomenon is
	\[
	\mathbb S^3\simeq SU(2),
	\]
	where all irreducible representations are self-dual. In the standard Fourier
	basis of \(SU(2)\), conjugation relates the entries
	\[
	(m,n)
	\quad\text{and}\quad
	(-m,-n)
	\]
	inside each representation block. We derive the corresponding internal
	\(2\times2\) systems and obtain complete characterizations of global
	hypoellipticity and global solvability on \(\mathbb S^3\). Thus the role played
	by the conjugate representation in the non-self-dual case is replaced, in the
	self-dual case, by an explicit blockwise symmetry.
	
	This distinction is not merely technical. It provides a way to analyze
	Vekua-type operators on compact Lie groups by identifying the Fourier-side
	mechanism produced by conjugation in each representation class. 
	
	On product
	groups such as \(\mathbb S^3\times\mathbb T^1,\) 
	both mechanisms may occur simultaneously: nonzero toroidal frequencies couple
	distinct conjugate representation classes, whereas the zero toroidal frequency
	leads to an internal coupling inside the \(\mathbb S^3\)-blocks. The examples
	at the end of the paper illustrate this interaction. They also show that a
	Vekua-type perturbation may restore global hypoellipticity for an operator whose
	diagonal part is not globally hypoelliptic, and that global solvability may hold
	even when global hypoellipticity fails.
	
	The study of global hypoellipticity and global solvability has its roots in the
	classical works of Greenfield and Wallach on vector fields on tori, where
	Diophantine conditions were shown to govern global regularity
	\cite{GW1972_pams,greenfield1973globally,GW1973_tams}. Related questions on
	solvability, global regularity, and involutive systems were developed, for
	instance, in
	\cite{bergamasco2014solvability,CH1977_pams,DM2016_mana,de2021solvability}.
	Fourier methods on compact Lie groups, based on the Peter--Weyl decomposition
	and on the global symbolic calculus of Ruzhansky and Turunen
	\cite{RT2010_book}, have provided a framework for studying global properties of
	left-invariant operators, vector fields, evolution operators, and diagonal
	systems; see, among others,
	\cite{da2025diagonal,KMR2020_bsm,KMR2021_jfa,KKM2024,KMP2021_jde}. Motivated by
	the theory of generalized analytic functions \cite{vekua2014generalized},
	Vekua-type equations have also been considered in global settings, including
	periodic operators and compact Lie groups
	\cite{de2022regularity,kirilov2025denjoy,kirilov2026solvability}.
	
	The present work continues this line in two directions. First, it treats
	Vekua-type perturbations of arbitrary-order diagonal operators on compact Lie
	groups and gives determinant-based criteria for global hypoellipticity and
	global solvability in the non-self-dual regime. Second, it develops a separate
	blockwise analysis for the self-dual structure of
	\(\mathbb S^3\simeq SU(2)\). Together, these two analyses provide a framework
	for handling the conjugation term according to how conjugation acts on the
	Fourier side: either between distinct conjugate representations or inside a
	self-dual representation block.
	
	The paper is organized as follows. Section~\ref{overview} recalls the notation
	and basic facts from Fourier analysis on compact Lie groups. In
	Section~\ref{Section_Vekua_operators}, we introduce Vekua-type operators
	associated with diagonal operators and study their global hypoellipticity and
	global solvability in the non-self-dual setting. Section~\ref{Section_S3} is
	devoted to the self-dual structure of Vekua-type operators on
	\(\mathbb S^3\simeq SU(2)\), where we obtain the corresponding blockwise
	characterizations. Finally, Section~\ref{Section_examples} presents examples
	illustrating the criteria, including product groups where the non-self-dual and
	self-dual mechanisms coexist.

	\section{Fourier Analysis on Compact Lie Groups}\label{overview}
	
	In this section, we fix the notation and recall the basic facts on Fourier
	analysis on compact Lie groups that will be used throughout the paper. For a
	detailed account, including proofs, we refer the reader to \cite{RT2010_book}.
	
	Let \(G\) be a compact Lie group endowed with its normalized Haar measure
	\(\mu\). We denote by \(\Rep(G)\) the set of continuous irreducible unitary
	representations of \(G\), and by
	\[
	\widehat G:=\Rep(G)/{\sim}
	\]
	its unitary dual, that is, the set of equivalence classes of irreducible
	unitary representations.
	
	For each class \(\Xi\in\widehat G\), we fix a representative
	\[
	\xi:G\to U(d_\xi),
	\]
	where \(d_\xi\) denotes the dimension of \(\xi\). Writing
	\[
	\xi=(\xi_{mn})_{1\leq m,n\leq d_\xi},
	\]
	the Peter--Weyl theorem asserts that
	\[
	\bigcup_{\Xi\in\widehat G}
	\left\{\sqrt{d_\xi}\,\xi_{mn}:1\leq m,n\leq d_\xi\right\}
	\]
	is an orthonormal basis of \(L^2(G)\).
	
	Throughout the paper, one representative \(\xi\) is fixed for each equivalence
	class \(\Xi\in\widehat G\). In particular situations, additional assumptions on
	these representatives may be imposed.
	
	For \(f\in L^1(G)\) and \(\Xi=[\xi]\in\widehat G\), the Fourier coefficient of
	\(f\) at \(\xi\) is the matrix
	\[
	\widehat f(\xi)
	:=\int_G f(g)\xi(g)^*\,d\mu(g).
	\]
	This definition depends on the choice of representative only up to unitary
	conjugation; all statements below are invariant under this change, or are to be
	understood after fixing the above system of representatives.
	
	We denote by \(C^\infty(G)\) the space of smooth functions on \(G\), endowed
	with its usual Fréchet topology, and by \(\DD(G)\) the space of distributions
	on \(G\), that is, the topological dual of \(C^\infty(G)\).
	
	Let \(\Delta\) be the Laplace--Beltrami operator on \(G\). For each
	\(\Xi=[\xi]\in\widehat G\), all matrix coefficients \(\xi_{mn}\) are
	eigenfunctions of \(\Delta\) with the same eigenvalue \(\lambda(\xi)\leq 0\).
	We write
	\[
	\jp{\xi}:=(1-\lambda(\xi))^{1/2},
	\]
	which is the eigenvalue of the positive operator \((I-\Delta)^{1/2}\)
	associated with the representation class \(\Xi\). In particular,
	\(\jp{\xi}\geq 1\).
	
	For \(u\in\DD(G)\) and \(\Xi=[\xi]\in\widehat G\), the Fourier coefficient of
	\(u\) at \(\xi\) is defined by
	\[
	\widehat u(\xi):=\langle u,\xi^*\rangle.
	\]
	Again, this is well defined up to unitary conjugation.
	
	The Peter--Weyl decomposition allows one to characterize distributions and
	smooth functions in terms of the growth or decay of their Fourier coefficients.
	More precisely, suppose that to each \(\Xi=[\xi]\in\widehat G\) we associate a
	matrix
	\[
	x(\xi)=(x(\xi)_{mn})\in\C^{d_\xi\times d_\xi}
	\]
	and that there exist constants \(M>0\) and \(N>0\) such that
	\begin{equation}\label{dist}
		|x(\xi)_{mn}|\leq M\jp{\xi}^{N},
	\end{equation}
	for all \(1\leq m,n\leq d_\xi\) and all \(\Xi=[\xi]\in\widehat G\). Then the
	series
	\[
	u:=\sum_{\Xi=[\xi]\in\widehat G}
	d_\xi\sum_{m,n}x(\xi)_{mn}\xi_{nm}
	\]
	converges in \(\DD(G)\) and defines a distribution satisfying
	\[
	\widehat u(\xi)_{mn}=x(\xi)_{mn}.
	\]
	
	Conversely, if \(u\in\DD(G)\), then there exist constants \(M>0\) and \(N>0\)
	such that
	\[
	|\widehat u(\xi)_{mn}|\leq M\jp{\xi}^{N},
	\]
	for all \(1\leq m,n\leq d_\xi\) and all \(\Xi=[\xi]\in\widehat G\).
	
	Similarly, a distribution \(u\in\DD(G)\) is smooth if and only if its Fourier
	coefficients are rapidly decreasing: for every \(N>0\), there exists
	\(M_N>0\) such that
	\begin{equation}\label{smooth1}
		|\widehat u(\xi)_{mn}|\leq M_N\jp{\xi}^{-N},
	\end{equation}
	for all \(1\leq m,n\leq d_\xi\) and all \(\Xi=[\xi]\in\widehat G\). In this
	case, the associated Fourier series converges in the usual sense for smooth
	functions, and in particular in \(L^2(G)\), where the Plancherel formula holds.
	
	Let \(P:C^\infty(G)\to C^\infty(G)\) be a continuous linear operator. Its
	symbol is defined by
	\[
	\sigma_P(g,\xi):=\xi(g)^*(P\xi)(g),
	\]
	where \(P\xi\) denotes the matrix whose entries are
	\[
	(P\xi)_{mn}:=P(\xi_{mn}),\qquad 1\leq m,n\leq d_\xi.
	\]
	
	If \(P\) is left-invariant and extends continuously to \(\DD(G)\), then
	\(\sigma_P\) is independent of \(g\). In this case, for every \(u\in\DD(G)\)
	and every \(\Xi=[\xi]\in\widehat G\), one has
	\begin{equation}\label{multiply}
		\widehat{Pu}(\xi)=\sigma_P(\xi)\widehat u(\xi).
	\end{equation}
	
	Finally, for \(u\in\DD(G)\), we define the conjugate distribution \(\overline u\)
	by
	\[
	\langle\overline u,\varphi\rangle
	:=\overline{\langle u,\overline\varphi\rangle},
	\qquad \varphi\in C^\infty(G).
	\]
	Then \(\overline u\in\DD(G)\). If \(\xi\in\Rep(G)\), the map
	\[
	\overline\xi(g):=\overline{\xi(g)},\qquad g\in G,
	\]
	defines the conjugate representation of \(\xi\), with
	\(d_{\overline\xi}=d_\xi\).
	
	Whenever the conjugate representation \(\overline\xi\) appears in a Fourier
	coefficient, we shall either regard \(\widehat u(\overline\xi)\) as the Fourier
	coefficient computed with respect to the representation \(\overline\xi\) itself,
	or choose the representatives in conjugate classes accordingly. Since different
	choices of equivalent representatives are related by unitary conjugation, this
	convention does not affect the estimates used below.
	
	Since \((\xi^*)_{k\ell}=\overline{\xi_{\ell k}}\), the Fourier coefficients of
	\(u\) and \(\overline u\) are related by
	\[
	\widehat{\overline u}(\xi)_{k\ell}
	=\langle\overline u,\overline{\xi}_{\ell k}\rangle
	=\overline{\langle u,\xi_{\ell k}\rangle}
	=\overline{\langle u,((\overline\xi)^*)_{k\ell}\rangle}
	=\overline{\widehat u(\overline\xi)_{k\ell}},
	\]
	and therefore
	\begin{equation}\label{conj}
		\widehat{\overline u}(\xi)
		=
		\overline{\widehat u(\overline\xi)}, \qquad [\xi]\in\widehat G.
	\end{equation}

	\section{Vekua-Type Operators on Compact Lie Groups}
	\label{Section_Vekua_operators}
	
	Let \(G\) be a compact Lie group, and let
	\[
	L:\DD(G)\to\DD(G)
	\]
	be a continuous left-invariant operator such that
	\[
	L(C^\infty(G))\subset C^\infty(G).
	\]
	
	Given \(p,q\in\C\) with \(p\neq0\), we define the Vekua-type operator associated
	with \(L\) by
	\begin{equation}\label{vekua_op}
		Pu:=Lu-q\,u-p\,\overline u,
		\qquad u\in\DD(G).
	\end{equation}
	
	Because of the term involving \(\overline u\), the operator \(P\) is
	\(\R\)-linear, but in general it is not \(\C\)-linear.
	
	Our purpose in this section is to study the global properties of \(P\), namely
	its global hypoellipticity and global solvability. The main difficulty comes
	from the conjugation term, which couples the Fourier coefficients of \(u\) with
	those of \(\overline u\). We begin by deriving the Fourier-side system
	associated with the equation \(Pu=f\).
	
	Let \(u,f\in\DD(G)\) and suppose that
	\[
	Pu=f.
	\]
	
	Taking Fourier coefficients and using \eqref{multiply}, we obtain, for every
	\([\xi]\in\widehat G\),
	\[
	\widehat{Pu}(\xi)
	=
	\widehat{Lu}(\xi)
	-q\,\widehat u(\xi)
	-p\,\widehat{\overline u}(\xi).
	\]
	
	Thus,
	\[
	\big(\sigma_L(\xi)-qI_{d_\xi}\big)\widehat u(\xi)
	-p\,\widehat{\overline u}(\xi)
	=
	\widehat f(\xi).
	\]
	
	Applying the same argument to the conjugate representation \(\overline\xi\) and
	using \eqref{conj}, we have
	\[
	\widehat{Pu}(\overline\xi)
	=
	\big(\sigma_L(\overline\xi)-qI_{d_\xi}\big)
	\widehat u(\overline\xi)
	-p\,\widehat{\overline u}(\overline\xi)
	=
	\widehat f(\overline\xi).
	\]
	
	Equivalently,
	\[
	\big(\sigma_L(\overline\xi)-qI_{d_\xi}\big)
	\overline{\widehat{\overline u}(\xi)}
	-p\,\overline{\widehat u(\xi)}
	=
	\overline{\widehat{\overline f}(\xi)}.
	\]
	
	Taking complex conjugates yields
	\[
	-\overline p\,\widehat u(\xi)
	+
	\big(\overline{\sigma_L(\overline\xi)}
	-\overline q I_{d_\xi}\big)
	\widehat{\overline u}(\xi)
	=
	\widehat{\overline f}(\xi).
	\]
	
	Therefore, for each \([\xi]\in\widehat G\), the Fourier coefficients of \(u\)
	and \(\overline u\) satisfy the system
	\begin{equation}\label{mainsystem}
		\left\{
		\begin{array}{l}
			\big(\sigma_L(\xi)-qI_{d_\xi}\big)\widehat u(\xi)
			-p\,\widehat{\overline u}(\xi)
			=
			\widehat f(\xi),
			\\[0.4em]
			-\overline p\,\widehat u(\xi)
			+
			\big(\overline{\sigma_L(\overline\xi)}
			-\overline q I_{d_\xi}\big)
			\widehat{\overline u}(\xi)
			=
			\widehat{\overline f}(\xi).
		\end{array}
		\right.
	\end{equation}
	
	In order to solve this system explicitly, we restrict our attention to a class
	of left-invariant operators whose symbols can be diagonalized in a way
	compatible with conjugation.
	
	\begin{definition}\label{diagonal}
		Let \(L:\DD(G)\to\DD(G)\) be a continuous left-invariant operator preserving
		smooth functions. We say that \(L\) is \emph{diagonal} if the following
		conditions hold:
		\begin{enumerate}
			\item[1.] For each \(\Xi\in\widehat G\), there exists a representative
			\(\xi\in\Xi\) such that \(\sigma_L(\xi)\) is diagonal. Moreover, whenever
			the conjugate representation \(\overline\xi\) is used, the symbols are
			compatible with conjugation in the sense that
			\[
			\sigma_L(\overline\xi)
			=
			\overline{\sigma_L(\xi)}.
			\]
			We write
			\(
			\sigma_L(\xi)
			=
			\operatorname{diag}
			\big(\sigma_1(\xi),\ldots,\sigma_{d_\xi}(\xi)\big).
			\)
			
			\item[2.] There exist constants \(C_\sigma>0\) and \(K\in\mathbb N_0\) such
			that
			\begin{equation}\label{condsimbolo}
				|\sigma_k(\xi)|
				\leq
				C_\sigma\jp{\xi}^{K},
			\end{equation}
			for every \([\xi]\in\widehat G\) and every \(1\leq k\leq d_\xi\).
		\end{enumerate}
	\end{definition}
	
	\begin{example}
		Let \(\{X_1,\ldots,X_n\}\) be a finite family of real left-invariant vector
		fields on a compact Lie group \(G\). For a multi-index
		\(\alpha=(\alpha_1,\ldots,\alpha_n)\in\mathbb N_0^n\), set
		\[
		X^\alpha:=X_1^{\alpha_1}\circ\cdots\circ X_n^{\alpha_n}.
		\]
		Given \(r\in\mathbb N\) and coefficients \(a_\alpha\in\R\), the differential
		operator
		\[
		L:=\sum_{1\leq |\alpha|\leq r}a_\alpha X^\alpha
		\]
		defines a continuous left-invariant operator on \(\DD(G)\) and preserves
		\(C^\infty(G)\).
		
		Since the vector fields \(X_j\) are real, their symbols satisfy
		\[
		\sigma_{X_j}(\overline\xi)
		=
		\overline{\sigma_{X_j}(\xi)},
		\qquad j=1,\ldots,n.
		\]
		Hence the same compatibility with conjugation holds for every such operator
		\(L\):
		\[
		\sigma_L(\overline\xi)
		=
		\overline{\sigma_L(\xi)}.
		\]
		
		Moreover, each \(X_j\) is skew-symmetric on \(L^2(G)\), and therefore
		\(iX_j\) is symmetric. Consequently, for a single real left-invariant vector
		field \(X\), one may choose, in each representation class
		\(\Xi\in\widehat G\), a unitary representative for which
		\[
		\sigma_X(\xi)
		=
		\operatorname{diag}
		\big(i\mu_1(\xi),\ldots,i\mu_{d_\xi}(\xi)\big),
		\qquad
		\mu_\ell(\xi)\in\R.
		\]
		
		It follows that every polynomial in \(X\) with real coefficients is diagonal
		in the sense of Definition~\ref{diagonal}.
		
		More generally, if the vector fields \(X_1,\ldots,X_n\) commute pairwise,
		then, for each \(\Xi\in\widehat G\), the matrices
		\(\sigma_{X_1}(\xi),\ldots,\sigma_{X_n}(\xi)\) are commuting normal matrices.
		Hence they can be simultaneously diagonalized by a unitary change of basis,
		and every operator of the above form is diagonal.
	\end{example}
	
	From now on, assume that \(L\) is diagonal, and let \(P\) be the Vekua-type
	operator defined in \eqref{vekua_op}.
	
	For each \(\Xi\in\widehat G\), we fix a representative \(\xi\in\Xi\) satisfying
	Definition~\ref{diagonal}. Since
	\[
	\sigma_L(\overline\xi)
	=
	\overline{\sigma_L(\xi)},
	\]
	we have
	\[
	\overline{\sigma_L(\overline\xi)}
	=
	\sigma_L(\xi).
	\]
	
	Thus the matrix system \eqref{mainsystem} decouples into scalar systems for the
	entries of the Fourier coefficients. For every \([\xi]\in\widehat G\) and every
	\(1\leq k,\ell\leq d_\xi\), we obtain
	\begin{equation}\label{subsystem}
		\left\{
		\begin{array}{l}
			(\sigma_k(\xi)-q)\widehat u(\xi)_{k\ell}
			-p\,\widehat{\overline u}(\xi)_{k\ell}
			=
			\widehat f(\xi)_{k\ell},
			\\[0.4em]
			-\overline p\,\widehat u(\xi)_{k\ell}
			+
			(\sigma_k(\xi)-\overline q)
			\widehat{\overline u}(\xi)_{k\ell}
			=
			\widehat{\overline f}(\xi)_{k\ell}.
		\end{array}
		\right.
	\end{equation}
	
	The determinant of this \(2\times2\) system is
	\begin{equation}\label{discriminant}
		\Delta_k(\xi)
		:=
		(\sigma_k(\xi)-q)(\sigma_k(\xi)-\overline q)
		-|p|^2.
	\end{equation}
	
	Equivalently,
	\[
	\Delta_k(\xi)
	=
	\sigma_k(\xi)^2
	-2\Re(q)\sigma_k(\xi)
	+|q|^2-|p|^2.
	\]
	
	Eliminating \(\widehat{\overline u}(\xi)_{k\ell}\) from
	\eqref{subsystem}, or equivalently applying Cramer's rule whenever
	\(\Delta_k(\xi)\neq0\), gives the identity
	\begin{equation}\label{cramer}
		\Delta_k(\xi)\widehat u(\xi)_{k\ell}
		=
		(\sigma_k(\xi)-\overline q)\widehat f(\xi)_{k\ell}
		+
		p\,\widehat{\overline f}(\xi)_{k\ell}.
	\end{equation}
	This identity is valid for all \([\xi]\in\widehat G\) and all
	\(1\leq k,\ell\leq d_\xi\), regardless of whether \(\Delta_k(\xi)\) vanishes.
	
	Thus, the determinant \(\Delta_k(\xi)\) is the small divisor associated with the
	Fourier-side Vekua system. In order to recover regularity of \(u\) from the
	regularity of \(Pu=f\), the rapid decay of the right-hand side of
	\eqref{cramer} must be transferred to the coefficient
	\(\widehat u(\xi)_{k\ell}\). This transfer is possible only when division by
	\(\Delta_k(\xi)\) does not destroy the decay of the Fourier coefficients.
	Hence the obstruction to global regularity is not merely the vanishing of
	\(\Delta_k(\xi)\), but also the possibility that \(\Delta_k(\xi)\) becomes too
	small along high-frequency representations. This leads naturally to the
	following Diophantine-type lower bound.
	
	\begin{definition}\label{DC}
		Let \(L\) be a diagonal operator in \(\DD(G)\) in the sense of
		Definition~\ref{diagonal}, let \(p,q\in\C\) with \(p\neq0\), and let \(P\)
		be the associated Vekua-type operator defined by \eqref{vekua_op}. We say
		that \(P\) satisfies condition \emph{(DC)} if there exist constants
		\(C>0\), \(M>0\), and \(R>0\) such that
		\[
		|\Delta_k(\xi)| \geq C\jp{\xi}^{-M},
		\]
		for all \([\xi]\in\widehat G\) with \(\jp{\xi}\geq R\) and all
		\(1\leq k\leq d_\xi\).
	\end{definition}

	\subsection{Global Hypoellipticity} \
	\label{subsection_GH}
	
	We first consider global hypoellipticity. The question is whether the
	regularity of the datum \(Pu\) implies the same regularity for the distribution
	\(u\). On the Fourier side, this amounts to asking whether rapid decay of the
	coefficients of \(Pu\) can be transferred to the coefficients of \(u\).
	Formula \eqref{cramer} shows that the only possible obstruction to this
	transfer, apart from finitely many low-frequency modes, is the smallness of the
	determinant \(\Delta_k(\xi)\). Condition \emph{(DC)} is precisely the lower
	bound that prevents this division from destroying rapid decay.
	
	\begin{definition}
		Let \(P:\DD(G)\to\DD(G)\) be a continuous \(\R\)-linear operator such that
		\(P(C^\infty(G))\subset C^\infty(G).\) 
		We say that \(P\) is globally hypoelliptic if
		\[
		u\in\DD(G),
		\quad
		Pu\in C^\infty(G)
		\quad\Longrightarrow\quad
		u\in C^\infty(G).
		\]
	\end{definition}
	
	\begin{proposition}\label{firstprop}
		Let \(L:\DD(G)\to\DD(G)\) be a diagonal operator in the sense of
		Definition~\ref{diagonal}, let \(p,q\in\C\) with \(p\neq0\), and let \(P\)
		be the associated Vekua-type operator defined by \eqref{vekua_op}. 
		
		If \(P\)
		satisfies condition \emph{(DC)}, then \(P\) is globally hypoelliptic.
	\end{proposition}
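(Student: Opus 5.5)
We argue directly on the Fourier side, using the identity \eqref{cramer}. Let \(u\in\DD(G)\) with \(f:=Pu\in C^\infty(G)\). Our aim is to show that the coefficients \(\widehat u(\xi)_{k\ell}\) are rapidly decreasing in the sense of \eqref{smooth1}, since that characterizes smoothness of \(u\).

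First we note that \(\overline f\in C^\infty(G)\). By \eqref{conj}, \(\widehat{\overline f}(\xi)=\overline{\widehat f(\overline\xi)}\), and since \(\jp{\overline\xi}=\jp{\xi}\) (the Laplacian commutes with conjugation), the coefficients of \(\overline f\) inherit the rapid decay of those of \(f\). We also need uniformity with respect to the choice of representatives. Entries of \(\widehat f(\overline\xi)\) computed in another equivalent unitary representative differ by unitary conjugation. This changes entries by at most a factor \(d_\xi^2\), and \(d_\xi^2\le C\jp{\xi}^{\dim G}\) is polynomially bounded, so rapid decay is preserved. Hence for every \(N>0\) there is \(M_N\) with \(|\widehat f(\xi)_{k\ell}|+|\widehat{\overline f}(\xi)_{k\ell}|\le M_N\jp{\xi}^{-N}\) for all \(\xi,k,\ell\).

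Next we split frequencies according to condition (DC). For \(\jp{\xi}\ge R\) we have \(\Delta_k(\xi)\neq0\), and \eqref{cramer} gives
\[
|\widehat u(\xi)_{k\ell}|\le C^{-1}\jp{\xi}^{M}\Big(\big(|\sigma_k(\xi)|+|q|\big)|\widehat f(\xi)_{k\ell}|+|p|\,|\widehat{\overline f}(\xi)_{k\ell}|\Big).
\]
By \eqref{condsimbolo}, \(|\sigma_k(\xi)|+|q|\le (C_\sigma+|q|)\jp{\xi}^K\). Therefore
\[
|\widehat u(\xi)_{k\ell}|\le C'\jp{\xi}^{M+K}M_{N+M+K}\jp{\xi}^{-N-M-K}=C''_N\jp{\xi}^{-N}.
\]
The remaining classes have \(\jp{\xi}<R\), and there are only finitely many of them, since the Laplace eigenvalues have finite multiplicity and accumulate only at \(-\infty\). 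For these we use the distributional bound \(|\widehat u(\xi)_{mn}|\le M\jp{\xi}^{N_0}\), which gives a finite maximum. We enlarge the constant to cover them, and obtain the bound \eqref{smooth1} for all \(\xi\). Thus \(u\in C^\infty(G)\).

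The only delicate point is the bookkeeping with representatives. The diagonalizing representative for \(\xi\) must be the one in which \eqref{subsystem} holds, and \(\widehat{\overline f}\) must be expressed via the representative of \(\overline\xi\) with the conjugation compatibility of Definition~\ref{diagonal}. We handle this with the unitary-invariance remark above: the polynomial factor \(d_\xi^2\) is absorbed into the rapid decay. Everything else is a direct estimate.
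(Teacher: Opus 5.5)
Your proof is correct and follows essentially the same route as the paper. You split at $\jp{\xi}=R$, bound the finitely many low-frequency coefficients by a constant, and for $\jp{\xi}\ge R$ divide the identity \eqref{cramer} by $\Delta_k(\xi)$ using \emph{(DC)} and \eqref{condsimbolo}, absorbing the loss $\jp{\xi}^{M+K}$ into the rapid decay of $\widehat f$ and $\widehat{\overline f}$. Your extra remark on changing unitary representatives, which costs at most the polynomial factor $d_\xi^2$, is a harmless refinement; the paper leaves this implicit by simply noting that $\overline f\in C^\infty(G)$.
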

	
	\begin{proof}
		Suppose that \(P\) satisfies condition \emph{(DC)}. Thus, there exist
		constants \(C>0\), \(M>0\), and \(R>0\) such that
		\[
		|\Delta_k(\xi)|
		\geq
		C\jp{\xi}^{-M},
		\]
		for all \([\xi]\in\widehat G\) with \(\jp{\xi}\geq R\) and all
		\(1\leq k\leq d_\xi\). 
		
		Let \(u\in\DD(G)\) and suppose that
		\[
		Pu=f\in C^\infty(G).
		\]
		
		We shall prove that \(u\in C^\infty(G)\) by showing that its Fourier
		coefficients are rapidly decreasing.
		
		Let \(N>0\) be arbitrary. Since the set
		\(\{[\xi]\in\widehat G: \jp{\xi}<R\} \) 
		is finite, we may define
		\[
		C_1(N)
		:=
		\max
		\left\{
		|\widehat u(\xi)_{k\ell}|\jp{\xi}^{N}
		:
		\jp{\xi}<R,\;
		1\leq k,\ell\leq d_\xi
		\right\},
		\]
		whenever this set is nonempty; otherwise, we set \(C_1(N)=0\). 
		
		Then
		\[
		|\widehat u(\xi)_{k\ell}|
		\leq
		C_1(N)\jp{\xi}^{-N}
		\]
		for all \([\xi]\in\widehat G\) with \(\jp{\xi}<R\) and all
		\(1\leq k,\ell\leq d_\xi\).

		Since \(f\in C^\infty(G)\), we also have \(\overline f\in C^\infty(G)\).
		Hence, by the Fourier characterization of smooth functions, there exist
		constants \(C_2(N)>0\) and \(C_3(N)>0\) such that
		\[
		|\widehat f(\xi)_{k\ell}|
		\leq
		C_2(N)\jp{\xi}^{-N-M-K}
		\]
		and
		\[
		|\widehat{\overline f}(\xi)_{k\ell}|
		\leq
		C_3(N)\jp{\xi}^{-N-M},
		\]
		for all \([\xi]\in\widehat G\) and all \(1\leq k,\ell\leq d_\xi\).

		Now let \([\xi]\in\widehat G\) with \(\jp{\xi}\geq R\). By
		\eqref{cramer}, the triangle inequality, condition \emph{(DC)}, and the
		symbol estimate \eqref{condsimbolo}, we obtain
		\begin{align*}
			|\widehat u(\xi)_{k\ell}|
			&\leq
			\frac{1}{|\Delta_k(\xi)|}
			\left[
			|\sigma_k(\xi)-\overline q|\,
			|\widehat f(\xi)_{k\ell}|
			+
			|p|\,|\widehat{\overline f}(\xi)_{k\ell}|
			\right]
			\\
			&\leq
			\frac{1}{C}\jp{\xi}^{M}
			\left[
			\big(C_\sigma\jp{\xi}^{K}+|q|\big)
			C_2(N)\jp{\xi}^{-N-M-K}
			+
			|p|C_3(N)\jp{\xi}^{-N-M}
			\right]
			\\
			&\leq
			\frac{C_\sigma C_2(N)}{C}\jp{\xi}^{-N}
			+
			\frac{|q|C_2(N)}{C}\jp{\xi}^{-N-K}
			+
			\frac{|p|C_3(N)}{C}\jp{\xi}^{-N}
			\\
			&\leq
			\frac{1}{C}
			\left[
			C_\sigma C_2(N)
			+
			|q|C_2(N)
			+
			|p|C_3(N)
			\right]
			\jp{\xi}^{-N}.
		\end{align*}
		In the last inequality we used that \(\jp{\xi}\geq1\), so that
		\(\jp{\xi}^{-N-K}\leq\jp{\xi}^{-N}\).
		
		Therefore, setting
		\[
		\widetilde C(N)
		:=
		\max
		\left\{
		C_1(N),
		\frac{1}{C}
		\left[
		C_\sigma C_2(N)
		+
		|q|C_2(N)
		+
		|p|C_3(N)
		\right]
		\right\},
		\]
		we have
		\[
		|\widehat u(\xi)_{k\ell}|
		\leq
		\widetilde C(N)\jp{\xi}^{-N},
		\]
		for all \([\xi]\in\widehat G\) and all \(1\leq k,\ell\leq d_\xi\).
		
		Since \(N>0\) was arbitrary, the Fourier coefficients of \(u\) are rapidly
		decreasing. By the Fourier characterization of smooth functions on compact
		Lie groups, it follows that \(u\in C^\infty(G)\). Hence \(P\) is globally
		hypoelliptic.
	\end{proof}
	
	Note that if condition \emph{(DC)} holds, then \(\Delta_k(\xi)\) can vanish
	for at most finitely many representation classes \([\xi]\in\widehat G\) and
	indices \(1\leq k\leq d_\xi\). We now show that, under a non-self-duality
	assumption on the relevant representations, this finiteness property is
	necessary for global hypoellipticity.
	
	We introduce the zero set
	\begin{equation}\label{setZ}
		\mathcal Z
		:=
		\left\{
		[\xi]\in\widehat G:
		\Delta_k(\xi)=0
		\text{ for some } 1\leq k\leq d_\xi
		\right\}.
	\end{equation}
	
	The next result isolates the obstruction produced by infinitely many zeros of
	the determinant occurring along non-self-dual representation classes.
	
	\begin{proposition}\label{Zinfinite}
		Suppose that \(\mathcal Z\) contains infinitely many non-self-dual
		representation classes. Then \(P\) is not globally hypoelliptic.
	\end{proposition}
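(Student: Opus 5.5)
We construct a non-smooth distribution \(u\) with \(Pu=0\), built from the kernels of the \(2\times2\) systems \eqref{subsystem} at infinitely many zeros of the determinant. Since the set \(\{[\xi]:\jp{\xi}\le R\}\) is finite for every \(R\), we can choose infinitely many distinct non-self-dual classes \([\xi_j]\in\mathcal Z\) with \(\jp{\xi_j}\to\infty\), together with indices \(k_j\) such that \(\Delta_{k_j}(\xi_j)=0\). Passing to a subsequence, we also require that no two of the selected classes are conjugate to each other, i.e.\ \([\overline{\xi_j}]\neq[\xi_i]\) for all \(i,j\). This is possible because conjugation is an involution on \(\widehat G\): from each pair \(\{\Xi,\overline\Xi\}\) we keep at most one class. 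The non-self-duality \([\overline{\xi_j}]\neq[\xi_j]\) is what allows us to prescribe \(\widehat u(\xi_j)\) and \(\widehat u(\overline{\xi_j})\) independently.

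For each \(j\), the matrix
\[
A_j=\begin{pmatrix}\sigma_{k_j}(\xi_j)-q & -p\\ -\overline p & \sigma_{k_j}(\xi_j)-\overline q\end{pmatrix}
\]
is singular, and because \(p\neq0\) it has the nonzero kernel vector \((a_j,b_j)=(p,\;\sigma_{k_j}(\xi_j)-q)\). We normalize it so that \(|a_j|=1\), setting \(a_j=1\) and \(b_j=(\sigma_{k_j}(\xi_j)-q)/p\). By \eqref{condsimbolo}, \(|b_j|\le C\jp{\xi_j}^K\). We then define \(u\) through its Fourier coefficients. We put \(\widehat u(\xi_j)_{k_j 1}=a_j\) and \(\widehat u(\overline{\xi_j})_{k_j1}=\overline{b_j}\), using the compatible representative \(\overline{\xi_j}\) from Definition~\ref{diagonal}, and we set all other coefficients to zero. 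By \eqref{conj}, this gives \(\widehat{\overline u}(\xi_j)_{k_j1}=b_j\). These coefficients grow at most polynomially, so by \eqref{dist} the series defines \(u\in\DD(G)\). On the other hand, \(|\widehat u(\xi_j)_{k_j1}|=1\) for infinitely many classes with \(\jp{\xi_j}\to\infty\), so \(u\notin C^\infty(G)\) by \eqref{smooth1}.

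It remains to verify that \(Pu=0\), and we check this one Fourier coefficient at a time using the first equation of \eqref{mainsystem}. At \(\xi_j\), the \((k_j,1)\) entry equals \((A_j(a_j,b_j)^T)_1=0\), and every other entry vanishes identically. At \(\overline{\xi_j}\), we use \(\sigma_L(\overline{\xi_j})=\overline{\sigma_L(\xi_j)}\) and \(\widehat{\overline u}(\overline{\xi_j})=\overline{\widehat u(\xi_j)}\). The \((k_j,1)\) entry then becomes the complex conjugate of the second row of \(A_j(a_j,b_j)^T\), which is \(0\). At every other class, both \(\widehat u\) and \(\widehat{\overline u}\) vanish by the choice of the subsequence. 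Therefore \(Pu=0\in C^\infty(G)\) while \(u\notin C^\infty(G)\).

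The main obstacle is this last bookkeeping step. We must be sure that assigning coefficients at \(\xi_j\) and at \(\overline{\xi_j}\) is consistent, which requires that these are genuinely different classes (non-self-duality) and that no overlaps occur between different indices \(j\) (the subsequence selection). We must also use the representatives chosen in Definition~\ref{diagonal}, so that the conjugate symbol really is \(\overline{\sigma_{k_j}(\xi_j)}\) in the same diagonal entry.
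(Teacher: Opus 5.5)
Your proposal is correct and follows essentially the same route as the paper: select infinitely many pairwise non-conjugate, non-self-dual zeros $[\xi_j]$, place a kernel vector of the singular $2\times2$ system \eqref{subsystem} on the pair $[\xi_j],[\overline{\xi_j}]$, and conclude that $Pu=0$ while a coefficient of modulus bounded below persists at arbitrarily high frequency. The differences are only cosmetic. You normalize the kernel vector as $(1,(\sigma_{k_j}(\xi_j)-q)/p)$ and use the entry $(k_j,1)$. The paper instead uses the proportional vector $(\sigma_{k_n}(\xi_n)-\overline q,\ \overline p)$ on the diagonal entry $(k_n,k_n)$, so its bounded-below coefficient is $p$, located at $\overline{\xi_n}$.
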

	
	\begin{proof}
		Since \(\mathcal Z\) contains infinitely many non-self-dual representation
		classes, we may choose a sequence of mutually distinct classes
		\([\xi_n]\in\mathcal Z\) and indices \(1\leq k_n\leq d_{\xi_n}\) such that
		\[
		\Delta_{k_n}(\xi_n)=0,
		\qquad n\in\N,
		\]
		and
		\[
		[\xi_n]\neq[\overline{\xi_n}],
		\qquad n\in\N.
		\]
		
		Passing to a subsequence if necessary, we may also assume that
		\[
		[\xi_n]\neq[\overline{\xi_m}]
		\qquad
		\text{for all } n,m\in\N.
		\]
		
		Define a family of matrices
		\[
		x(\xi)=\big(x(\xi)_{k\ell}\big)_{1\leq k,\ell\leq d_\xi},
		\qquad [\xi]\in\widehat G,
		\]
		by
		\[
		x(\xi)_{k\ell}
		=
		\begin{cases}
			\sigma_{k_n}(\xi_n)-\overline q,
			& \text{if } [\xi]=[\xi_n] \text{ and } k=\ell=k_n, \\[0.3em]
			p,
			& \text{if } [\xi]=[\overline{\xi_n}] \text{ and } k=\ell=k_n, \\[0.3em]
			0,
			& \text{otherwise}.
		\end{cases}
		\]
		
		We first show that this family defines a distribution. By the symbol estimate
		\eqref{condsimbolo} and the triangle inequality,
		\[
		|x(\xi_n)_{k_nk_n}|
		=
		|\sigma_{k_n}(\xi_n)-\overline q|
		\leq
		C_\sigma\jp{\xi_n}^{K}+|q|.
		\]
		
		Since \(\jp{\xi_n}\geq1\), it follows that
		\[
		|x(\xi_n)_{k_nk_n}|
		\leq
		(C_\sigma+|q|)\jp{\xi_n}^{K}.
		\]
		
		Moreover, using \(\jp{\overline{\xi_n}}=\jp{\xi_n}\geq1\), we obtain
		\[
		|x(\overline{\xi_n})_{k_nk_n}|
		=
		|p|
		\leq
		|p|\jp{\overline{\xi_n}}^{K}.
		\]
		
		Combining these estimates with the definition of \(x\), and setting
		\[
		C_0:=\max\{C_\sigma+|q|,|p|\},
		\]
		we obtain
		\[
		|x(\xi)_{k\ell}|
		\leq
		C_0\jp{\xi}^{K},
		\]
		for all \([\xi]\in\widehat G\) and all \(1\leq k,\ell\leq d_\xi\).
		
		By the Fourier characterization of distributions, there exists
		\(u\in\DD(G)\) such that
		\[
		\widehat u(\xi)_{k\ell}=x(\xi)_{k\ell}.
		\]
		
		We claim that \(u\) is not smooth. Indeed, since the classes
		\([\overline{\xi_n}]\) are mutually distinct and only finitely many
		representation classes satisfy \(\jp{\xi}\leq R\) for each \(R>0\), we have
		\[
		\jp{\overline{\xi_n}}=\jp{\xi_n}\to\infty.
		\]
		
		However,
		\[
		|\widehat u(\overline{\xi_n})_{k_nk_n}|
		=
		|p|
		\neq0,
		\qquad n\in\N.
		\]
		
		Thus the Fourier coefficients of \(u\) are not rapidly decreasing, and
		therefore \(u\notin C^\infty(G)\).
		
		It remains to prove that \(Pu=0\). If
		\[
		[\xi]\notin \{[\xi_n],[\overline{\xi_n}]:n\in\N\},
		\]
		then both \(\widehat u(\xi)\) and \(\widehat{\overline u}(\xi)\) vanish, and
		therefore
		\[
		\widehat{Pu}(\xi)=0.
		\]
		
		We now check the classes \([\xi_n]\) and \([\overline{\xi_n}]\). Using
		\(\widehat{\overline u}(\xi)=\overline{\widehat u(\overline\xi)}\), we obtain
		\begin{align*}
			\widehat{Pu}(\xi_n)_{k_nk_n}
			&=
			(\sigma_{k_n}(\xi_n)-q)
			\widehat u(\xi_n)_{k_nk_n}
			-
			p\,\widehat{\overline u}(\xi_n)_{k_nk_n}
			\\
			&=
			(\sigma_{k_n}(\xi_n)-q)
			(\sigma_{k_n}(\xi_n)-\overline q)
			-
			p\,\overline{\widehat u(\overline{\xi_n})_{k_nk_n}}
			\\
			&=
			(\sigma_{k_n}(\xi_n)-q)
			(\sigma_{k_n}(\xi_n)-\overline q)
			-
			p\,\overline p
			\\
			&=
			\Delta_{k_n}(\xi_n)
			=
			0.
		\end{align*}
		
		For the conjugate class, using
		\[
		\sigma_{k_n}(\overline{\xi_n})
		=
		\overline{\sigma_{k_n}(\xi_n)},
		\]
		we have
		\begin{align*}
			\widehat{Pu}(\overline{\xi_n})_{k_nk_n}
			&=
			(\sigma_{k_n}(\overline{\xi_n})-q)
			\widehat u(\overline{\xi_n})_{k_nk_n}
			-
			p\,\widehat{\overline u}(\overline{\xi_n})_{k_nk_n}
			\\
			&=
			(\overline{\sigma_{k_n}(\xi_n)}-q)p
			-
			p\,\overline{\widehat u(\xi_n)_{k_nk_n}}
			\\
			&=
			p(\overline{\sigma_{k_n}(\xi_n)}-q)
			-
			p\,\overline{\sigma_{k_n}(\xi_n)-\overline q}
			\\
			&=
			p(\overline{\sigma_{k_n}(\xi_n)}-q)
			-
			p(\overline{\sigma_{k_n}(\xi_n)}-q)
			=
			0.
		\end{align*}
		
		All remaining matrix entries in the classes \([\xi_n]\) and
		\([\overline{\xi_n}]\) vanish by construction, and the corresponding entries
		of \(\widehat{\overline u}\) vanish as well. Hence
		\[
		\widehat{Pu}(\xi)=0
		\qquad
		\text{for every }[\xi]\in\widehat G.
		\]
		
		Therefore
		\[
		Pu=0\in C^\infty(G),
		\qquad
		u\in\DD(G)\setminus C^\infty(G).
		\]
		
		This shows that \(P\) is not globally hypoelliptic.
	\end{proof}
	
	For the full necessity of condition \emph{(DC)}, we need the non-self-dual
	mechanism to be available at all sufficiently high frequencies. We therefore
	assume that only finitely many irreducible representations are self-dual.
	
	\begin{proposition}\label{DCnecessary}
		Suppose that, outside a finite subset of \(\widehat G\), all irreducible
		representations are non-self-dual. If the Vekua-type operator \(P\) is
		globally hypoelliptic, then condition \emph{(DC)} holds.
	\end{proposition}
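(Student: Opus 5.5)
We argue by contraposition: assuming (DC) fails, we build \(u\in\DD(G)\setminus C^\infty(G)\) with \(Pu\in C^\infty(G)\), modelled on the construction of Proposition~\ref{Zinfinite} but with \(\Delta_{k_n}(\xi_n)\) small instead of zero. The failure of (DC) with \(C=1\), \(M=n\), \(R=n\) gives, for each \(n\in\N\), a class \([\xi_n]\) with \(\jp{\xi_n}\geq n\) and an index \(k_n\) with
\[
|\Delta_{k_n}(\xi_n)|<\jp{\xi_n}^{-n}.
\]
Since \(\jp{\xi_n}\to\infty\) and only finitely many classes have bounded \(\jp{\xi}\), we can pass to a subsequence so that the classes are mutually distinct. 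By the hypothesis of the proposition we may discard finitely many terms and assume every \(\xi_n\) is non-self-dual. A further subsequence (a greedy choice, using that \(\jp{\overline{\xi}}=\jp{\xi}\) and that each class coincides with at most one conjugate) gives \([\xi_n]\neq[\overline{\xi_m}]\) for all \(n,m\). Then the sets \(\{[\xi_n],[\overline{\xi_n}]\}\) are pairwise disjoint.

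Next we define \(u\) by exactly the same Fourier data as in Proposition~\ref{Zinfinite}:
\[
\widehat u(\xi_n)_{k_nk_n}=\sigma_{k_n}(\xi_n)-\overline q,\qquad \widehat u(\overline{\xi_n})_{k_nk_n}=p,
\]
with all other entries zero. The same estimate \(|x(\xi)_{k\ell}|\leq C_0\jp{\xi}^K\) shows \(u\in\DD(G)\). Since \(|\widehat u(\overline{\xi_n})_{k_nk_n}|=|p|\neq0\) while \(\jp{\overline{\xi_n}}\to\infty\), the distribution \(u\) is not smooth.

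The computation in Proposition~\ref{Zinfinite}, carried out without using \(\Delta_{k_n}(\xi_n)=0\), gives
\[
\widehat{Pu}(\xi_n)_{k_nk_n}=\Delta_{k_n}(\xi_n),\qquad \widehat{Pu}(\overline{\xi_n})_{k_nk_n}=0.
\]
All other coefficients of \(Pu\) vanish; this uses the disjointness of the pairs and the fact that the two systems involve only the diagonal entry \((k_n,k_n)\). So \(f=Pu\) has a single nonzero coefficient per class \(\xi_n\), bounded by \(\jp{\xi_n}^{-n}\). For fixed \(N\), once \(n\geq N\) we get \(|\widehat f(\xi_n)_{k_nk_n}|\leq\jp{\xi_n}^{-N}\), and the finitely many remaining terms are absorbed into a constant. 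Hence \(f\) has rapidly decreasing coefficients, so \(f\in C^\infty(G)\), which contradicts global hypoellipticity.

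The main point needing care is the subsequence selection guaranteeing that the \(\xi_n\) are non-self-dual and never conjugate to one another. This is where the hypothesis is used: the self-dual classes form a finite set, and \(\jp{\xi_n}\to\infty\), so they can be avoided. Without this, \(\xi_n\) and \(\overline{\xi_n}\) would share a block and the ansatz would break down. We should also confirm that the symbol compatibility \(\sigma_L(\overline\xi)=\overline{\sigma_L(\xi)}\) is applied with indices matched consistently between \(\xi_n\) and \(\overline{\xi_n}\), as in the earlier proof.
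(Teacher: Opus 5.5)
Your argument is correct, and it takes a genuinely different and simpler route than the paper's.

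\textbf{The paper's route.} The paper first disposes of the case \(\mathcal Z\) infinite by invoking Proposition~\ref{Zinfinite}. In the remaining case it builds a smooth datum \(f\) with coefficients \(\Delta_{k_n}(\xi_n)\) at \(\xi_n\) and \(\overline{\Delta_{k_n}(\xi_n)}\) at \(\overline{\xi_n}\); this \(f\) is incidentally real-valued. It then takes a family \(u_c\) whose coefficients \(c(\sigma_{k_n}(\xi)-\overline q)+p\overline c\) sit symmetrically on both classes and satisfy \(Pu_c=cf\). Because of this symmetry, non-smoothness of \(u_c\) is not automatic. The paper must choose the unimodular ratio \(\overline c/c\) so that \(\alpha_c=\overline q-p\overline c/c\) is not a root of \((\alpha-q)(\alpha-\overline q)-|p|^2\). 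It then argues by contradiction through a limit along the sequence.

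\textbf{Your route.} Your asymmetric ansatz, inherited from Proposition~\ref{Zinfinite}, removes all of that. The constant entry \(p\) at \(\overline{\xi_n}\) makes non-smoothness of \(u\) immediate. The product \(Pu\) carries only \(\Delta_{k_n}(\xi_n)\) at \((\xi_n,k_n,k_n)\). So exact zeros and small nonzero values of the determinant are treated in one stroke, and no case distinction on \(\mathcal Z\) is needed. Your construction contains Proposition~\ref{Zinfinite} as the special case \(\Delta_{k_n}(\xi_n)=0\).

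\textbf{What each buys.} The paper's version yields a fixed real-valued smooth datum admitting a one-parameter family of non-smooth solutions of \(Pu=cf\); the statement does not require this. Yours is shorter and more transparent.

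\textbf{A bookkeeping point.} After your successive passages to subsequences, the decay of \(\widehat{Pu}\) should be checked with the relabeled index. This is harmless, since \(|\Delta_{k_{n_j}}(\xi_{n_j})|<\jp{\xi_{n_j}}^{-n_j}\leq\jp{\xi_{n_j}}^{-j}\) by \(n_j\geq j\) and \(\jp{\cdot}\geq 1\).
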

	
	\begin{proof}
		We argue by contraposition. Suppose that condition \emph{(DC)} does not
		hold.
		
		If the zero set \(\mathcal Z\) is infinite, then, since only finitely many
		irreducible representations are self-dual, \(\mathcal Z\) contains infinitely
		many non-self-dual representation classes. Hence
		Proposition~\ref{Zinfinite} implies that \(P\) is not globally
		hypoelliptic. Thus, we may assume that \(\mathcal Z\) is finite.
		
		Since condition \emph{(DC)} fails and \(\mathcal Z\) is finite, there exist
		a sequence of mutually distinct representation classes
		\([\xi_n]\in\widehat G\) and indices \(1\leq k_n\leq d_{\xi_n}\) such that
		\[
		\jp{\xi_n}>n,
		\qquad
		0<|\Delta_{k_n}(\xi_n)|<\jp{\xi_n}^{-n},
		\qquad n\in\N.
		\]
		
		Moreover, since only finitely many irreducible representations are
		self-dual, we may pass to a subsequence and assume that all classes
		\([\xi_n]\) are non-self-dual. By choosing at most one class from each
		conjugate pair, we may also assume that
		\[
		[\xi_n]\neq[\overline{\xi_m}]
		\qquad
		\text{for all } n,m\in\N.
		\]
		
		Define a family of matrices
		\[
		x(\xi)=\big(x(\xi)_{k\ell}\big)_{1\leq k,\ell\leq d_\xi},
		\qquad [\xi]\in\widehat G,
		\]
		by
		\[
		x(\xi)_{k\ell}
		:=
		\begin{cases}
			\Delta_{k_n}(\xi_n),
			& \text{if } [\xi]=[\xi_n] \text{ and } k=\ell=k_n, \\[0.3em]
			\Delta_{k_n}(\overline{\xi_n}),
			& \text{if } [\xi]=[\overline{\xi_n}] \text{ and } k=\ell=k_n, \\[0.3em]
			0,
			& \text{otherwise}.
		\end{cases}
		\]
		
		Since
		\[
		|\Delta_{k_n}(\xi_n)|<\jp{\xi_n}^{-n}
		\]
		and
		\[
		\Delta_{k_n}(\overline{\xi_n})
		=
		\overline{\Delta_{k_n}(\xi_n)},
		\qquad
		\jp{\overline{\xi_n}}=\jp{\xi_n},
		\]
		the family \((x(\xi))_{[\xi]\in\widehat G}\) is rapidly decreasing.
		Therefore, by the Fourier characterization of smooth functions, there
		exists \(f\in C^\infty(G)\) such that
		\[
		\widehat f(\xi)_{k\ell}=x(\xi)_{k\ell}.
		\]
		
		We now choose a constant \(c\in\C\setminus\{0\}\). Let
		\[
		s:=\frac{\overline c}{c},
		\qquad |s|=1,
		\]
		and define
		\[
		\alpha_c:=\overline q-p\frac{\overline c}{c}
		=
		\overline q-ps.
		\]
		
		We choose \(s\) on the unit circle such that
		\begin{equation}\label{choice_s}
			(\alpha_c-q)(\alpha_c-\overline q)-|p|^2\neq0.
		\end{equation}
		
		Such a choice is always possible. Indeed, the expression
		\[
		(\overline q-ps-q)(\overline q-ps-\overline q)-|p|^2
		\]
		is a nonzero polynomial in \(s\), because \(p\neq0\), and hence it has only
		finitely many zeros on the unit circle. After choosing such an \(s\), we fix
		any \(c\in\C\setminus\{0\}\) satisfying \(\overline c/c=s\).
		
		For this fixed choice of \(c\), define a family of matrices
		\[
		y(\xi)=\big(y(\xi)_{k\ell}\big)_{1\leq k,\ell\leq d_\xi},
		\qquad [\xi]\in\widehat G,
		\]
		by
		\[
		y(\xi)_{k\ell}
		:=
		\begin{cases}
			c\big(\sigma_{k_n}(\xi)-\overline q\big)+p\overline c,
			& \text{if } \big([\xi]=[\xi_n]\text{ or }[\xi]=[\overline{\xi_n}]\big)
			\text{ and } k=\ell=k_n, \\[0.3em]
			0,
			& \text{otherwise}.
		\end{cases}
		\]
		
		By the symbol estimate \eqref{condsimbolo}, we have
		\[
		|y(\xi)_{k\ell}|
		\leq
		|c|\big(|\sigma_{k_n}(\xi)|+|q|\big)+|p|\,|c|
		\leq
		|c|(C_\sigma+|q|+|p|)\jp{\xi}^{K}.
		\]
		
		Hence \((y(\xi))_{[\xi]\in\widehat G}\) has polynomial growth and therefore
		defines a distribution \(u_c\in\DD(G)\) such that
		\[
		\widehat{u_c}(\xi)_{k\ell}=y(\xi)_{k\ell}.
		\]
		
		We claim that
		\[
		Pu_c=cf.
		\]
		It is enough to verify this on the classes \([\xi_n]\) and
		\([\overline{\xi_n}]\), since all other Fourier coefficients vanish by
		construction.
		
		For \([\xi_n]\), using \eqref{subsystem}, we obtain
		\begin{align*}
			\widehat{Pu_c}(\xi_n)_{k_nk_n}
			&=
			(\sigma_{k_n}(\xi_n)-q)\widehat{u_c}(\xi_n)_{k_nk_n}
			-
			p\,\widehat{\overline{u_c}}(\xi_n)_{k_nk_n}
			\\
			&=
			(\sigma_{k_n}(\xi_n)-q)
			\big(c(\sigma_{k_n}(\xi_n)-\overline q)+p\overline c\big)
			\\
			&\quad
			-
			p\,
			\overline{
				c(\sigma_{k_n}(\overline{\xi_n})-\overline q)+p\overline c
			}.
		\end{align*}
		
		Since
		\[
		\sigma_{k_n}(\overline{\xi_n})
		=
		\overline{\sigma_{k_n}(\xi_n)},
		\]
		the last expression becomes
		\begin{align*}
			\widehat{Pu_c}(\xi_n)_{k_nk_n}
			&=
			(\sigma_{k_n}(\xi_n)-q)
			\big(c(\sigma_{k_n}(\xi_n)-\overline q)+p\overline c\big)
			\\
			&\quad
			-
			p\big(
			\overline c(\sigma_{k_n}(\xi_n)-q)+\overline p c
			\big)
			\\
			&=
			c\big[
			(\sigma_{k_n}(\xi_n)-q)(\sigma_{k_n}(\xi_n)-\overline q)
			-|p|^2
			\big]
			\\
			&=
			c\Delta_{k_n}(\xi_n)
			=
			\widehat{cf}(\xi_n)_{k_nk_n}.
		\end{align*}
		
		For the conjugate class, the same computation gives
		\begin{align*}
			\widehat{Pu_c}(\overline{\xi_n})_{k_nk_n}
			&=
			c\big[
			(\sigma_{k_n}(\overline{\xi_n})-q)
			(\sigma_{k_n}(\overline{\xi_n})-\overline q)
			-|p|^2
			\big]
			\\
			&=
			c\Delta_{k_n}(\overline{\xi_n})
			=
			\widehat{cf}(\overline{\xi_n})_{k_nk_n}.
		\end{align*}
		
		Therefore
		\[
		Pu_c=cf\in C^\infty(G).
		\]
		
		We now show that \(u_c\) is not smooth. Suppose, by contradiction, that
		\(u_c\in C^\infty(G)\). Then its Fourier coefficients are rapidly
		decreasing. In particular,
		\[
		\widehat{u_c}(\xi_n)_{k_nk_n}
		=
		c\big(\sigma_{k_n}(\xi_n)-\overline q\big)+p\overline c
		\]
		must tend to zero as \(n\to\infty\). Since \(c\neq0\), this implies
		\[
		\sigma_{k_n}(\xi_n)
		\longrightarrow
		\overline q-p\frac{\overline c}{c}
		=
		\alpha_c.
		\]
		
		On the other hand,
		\[
		\Delta_{k_n}(\xi_n)
		=
		(\sigma_{k_n}(\xi_n)-q)(\sigma_{k_n}(\xi_n)-\overline q)-|p|^2
		\longrightarrow 0.
		\]
		
		Passing to the limit, we get
		\[
		(\alpha_c-q)(\alpha_c-\overline q)-|p|^2=0,
		\]
		which contradicts the choice of \(c\) in \eqref{choice_s}.
		
		Hence
		\[
		u_c\in\DD(G)\setminus C^\infty(G),
		\]
		while
		\[
		Pu_c=cf\in C^\infty(G).
		\]
		
		Therefore \(P\) is not globally hypoelliptic.
	\end{proof}
	
	We can now state the main result of this subsection. Combining the sufficiency
	given by Proposition~\ref{firstprop} with the necessity established in
	Proposition~\ref{DCnecessary}, we obtain the following characterization.
	
	\begin{theorem}\label{main_GH}
		Let \(L:\DD(G)\to\DD(G)\) be a diagonal operator in the sense of
		Definition~\ref{diagonal}, and let \(p,q\in\C\) with \(p\neq0\). Consider
		the associated Vekua-type operator
		\[
		Pu=Lu-q\,u-p\,\overline u.
		\]
		Suppose that, outside a finite subset of \(\widehat G\), all irreducible
		representations of \(G\) are non-self-dual. Then \(P\) is globally
		hypoelliptic if and only if \(P\) satisfies condition \emph{(DC)}.
	\end{theorem}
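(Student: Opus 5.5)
The theorem follows by combining the two preceding propositions, so the proof will be short. I will split it into the two implications.

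For sufficiency, I will assume that \(P\) satisfies condition \emph{(DC)} and invoke Proposition~\ref{firstprop}. That result uses only that \(L\) is diagonal in the sense of Definition~\ref{diagonal} and that \(p\neq0\). It gives global hypoellipticity with no assumption on self-duality. Its mechanism is identity \eqref{cramer}: outside the finitely many classes with \(\jp{\xi}<R\), the Diophantine lower bound on \(\Delta_k(\xi)\) and the polynomial bound \eqref{condsimbolo} on the symbols transfer the rapid decay of \(\widehat f\) and \(\widehat{\overline f}\) to \(\widehat u\).

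For necessity, I will invoke Proposition~\ref{DCnecessary}, whose hypothesis is exactly the standing assumption of the theorem: only finitely many classes in \(\widehat G\) are self-dual. Its proof runs by contraposition and splits into two cases.
\begin{enumerate}
\item[(i)] If the zero set \(\mathcal Z\) is infinite, the finiteness of the self-dual classes forces \(\mathcal Z\) to contain infinitely many non-self-dual classes. Proposition~\ref{Zinfinite} then yields a non-smooth \(u\) with \(Pu=0\).
\item[(ii)] If \(\mathcal Z\) is finite but \emph{(DC)} fails, one extracts non-self-dual classes \(\xi_n\) along which \(\Delta_{k_n}(\xi_n)\) is nonzero but smaller than every power of \(\jp{\xi_n}\). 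One builds a smooth \(f\) supported on \(\xi_n\) and \(\overline{\xi_n}\), and solves \(Pu_c=cf\) explicitly. The choice of \(c\) through \eqref{choice_s} guarantees that \(u_c\) is not smooth.
\end{enumerate}

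The only point that needs care, and the one I expect to be the main check, is that the hypotheses of the two propositions match those of Theorem~\ref{main_GH}. Condition \emph{(DC)} involves only the diagonal entries \(\sigma_k(\xi)\) of the fixed representatives. The conjugation compatibility \(\sigma_L(\overline\xi)=\overline{\sigma_L(\xi)}\) is what makes \(\Delta_k(\overline\xi)=\overline{\Delta_k(\xi)}\). This identity is used in the necessity construction to keep the coefficients on the conjugate classes of the same size. Once this is confirmed, the two propositions combine to give the equivalence.
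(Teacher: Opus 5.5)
Your proposal is correct and matches the paper's argument. The paper obtains Theorem~\ref{main_GH} exactly as you do: Proposition~\ref{firstprop} gives sufficiency with no self-duality assumption, and Proposition~\ref{DCnecessary} gives necessity, treating an infinite zero set via Proposition~\ref{Zinfinite} and a finite one via the explicit solutions \(u_c\) of \(Pu_c=cf\) with \(c\) chosen according to \eqref{choice_s}. Your remark that the compatibility \(\sigma_L(\overline\xi)=\overline{\sigma_L(\xi)}\) yields \(\Delta_k(\overline\xi)=\overline{\Delta_k(\xi)}\), which keeps \(f\) rapidly decreasing on the conjugate classes, is also accurate.
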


	\subsection{Global Solvability} \
	\label{Subsection_GS}
		
	We now turn to global solvability. Throughout this subsection, we assume that,
	outside a finite subset of \(\widehat G\), all irreducible representations are
	non-self-dual. The finitely many self-dual exceptional classes, if present, give
	rise only to finite-dimensional algebraic systems. Their solvability conditions
	will be included in the admissible space defined below. Consequently, these
	exceptional classes do not affect the asymptotic condition \emph{(DC')}, which
	is imposed only to control the high-frequency behavior of the inverse systems.
	
	Global solvability asks whether the equation
	\[
	Pu=f
	\]
	can be solved for a prescribed datum \(f\). On the Fourier side, this question
	is again governed by the determinant \(\Delta_k(\xi)\), but now its zeros play
	an additional role: they impose algebraic compatibility conditions on the datum.
	
	Indeed, let \(u,f\in\DD(G)\) satisfy \(Pu=f\). By \eqref{cramer}, for every
	\([\xi]\in\widehat G\) and every \(1\leq k,\ell\leq d_\xi\), we have
	\[
	\Delta_k(\xi)\widehat u(\xi)_{k\ell}
	=
	(\sigma_k(\xi)-\overline q)\widehat f(\xi)_{k\ell}
	+
	p\,\overline{\widehat f(\overline\xi)_{k\ell}}.
	\]
	Therefore, whenever \(\Delta_k(\xi)=0\), the right-hand side must vanish. Hence
	the datum \(f\) must satisfy the compatibility condition
	\begin{equation}\label{compatibility_condition}
		(\sigma_k(\xi)-\overline q)\widehat f(\xi)_{k\ell}
		+
		p\,\overline{\widehat f(\overline\xi)_{k\ell}}
		=
		0.
	\end{equation}
	
	\begin{definition}\label{setA}
		We denote by \(\mathcal A\) the space of all distributions \(f\in\DD(G)\)
		satisfying the following compatibility conditions:
		\begin{enumerate}
			\item[(A1)] For every non-self-dual class \([\xi]\in\widehat G\), every
			\(1\leq k,\ell\leq d_\xi\), and every zero of the determinant,
			\[
			\Delta_k(\xi)=0,
			\]
			one has
			\[
			(\sigma_k(\xi)-\overline q)\widehat f(\xi)_{k\ell}
			+
			p\,\overline{\widehat f(\overline\xi)_{k\ell}}
			=
			0.
			\]
			
			\item[(A2)] On the finitely many self-dual exceptional classes, the corresponding
			Fourier coefficients of \(f\) belong to the range of the finite-dimensional
			algebraic system induced by \(P\) on those classes.
		\end{enumerate}
	\end{definition}
	
	\begin{definition}
		We say that the Vekua-type operator \(P\) is globally solvable if
		\[
		P(\DD(G))=\mathcal A.
		\]
	\end{definition}
	
	We now explain how to construct a solution on the Fourier side. Let
	\(f\in\mathcal A\). If \(\Delta_k(\xi)\neq0\), then any solution of \(Pu=f\)
	must satisfy, by \eqref{cramer},
	\[
	\widehat u(\xi)_{k\ell}
	=
	\frac{
		(\sigma_k(\xi)-\overline q)\widehat f(\xi)_{k\ell}
		+
		p\,\overline{\widehat f(\overline\xi)_{k\ell}}
	}{
		\Delta_k(\xi)
	}.
	\]
	This formula determines the Fourier coefficients of the solution away from the
	zero set of \(\Delta_k\).
	
	On the zero set, the system is not invertible. In the non-self-dual classes, we
	choose the coefficients in conjugate pairs. Since
	\[
	\Delta_k(\overline\xi)=\overline{\Delta_k(\xi)},
	\]
	the zero set is invariant under conjugation. For each \(k\), outside the
	finitely many self-dual exceptional classes, we choose a disjoint decomposition
	\[
	\{[\xi]\in\widehat G:\Delta_k(\xi)=0\}
	=
	\mathcal Z_k^+\cup\mathcal Z_k^-,
	\]
	such that
	\[
	[\xi]\in\mathcal Z_k^+
	\quad\Longleftrightarrow\quad
	[\overline\xi]\in\mathcal Z_k^-.
	\]
	
	We now define a family of matrices
	\[
	x(\xi)=\big(x(\xi)_{k\ell}\big)_{1\leq k,\ell\leq d_\xi},
	\qquad [\xi]\in\widehat G.
	\]

	If \(\Delta_k(\xi)\neq0\), we set
	\begin{equation}\label{nonzero_assignment}
		x(\xi)_{k\ell}
		:=
		\frac{
			(\sigma_k(\xi)-\overline q)\widehat f(\xi)_{k\ell}
			+
			p\,\widehat{\overline f}(\xi)_{k\ell}
		}{
			\Delta_k(\xi)
		}.
	\end{equation}
	
	If \(\Delta_k(\xi)=0\) and \([\xi]\in\mathcal Z_k^+\), we set
	\begin{equation}\label{zero_assignment}
		x(\xi)_{k\ell}:=0,
		\qquad
		x(\overline\xi)_{k\ell}
		:=
		-\frac{1}{\overline p}\,
		\overline{\widehat f(\xi)_{k\ell}}.
	\end{equation}
	
	On the finitely many exceptional self-dual classes, the coefficients are chosen
	as any solution of the corresponding finite-dimensional algebraic system, whose
	solvability is part of the admissibility condition defining \(\mathcal A\).
	
	Thus the family \(x(\xi)\) is defined for every representation class and every
	matrix entry. The only remaining issue is to guarantee that this family has at
	most polynomial growth. This is precisely the role of the following
	Diophantine-type condition away from the zero set.
	
	\begin{definition}\label{DCprime}
		Let \(L:\DD(G)\to\DD(G)\) be a diagonal operator in the sense of
		Definition~\ref{diagonal}, let \(p,q\in\C\) with \(p\neq0\), and let \(P\)
		be the associated Vekua-type operator defined by \eqref{vekua_op}. We say
		that \(P\) satisfies condition \emph{(DC')} if there exist constants
		\(C>0\) and \(M>0\) such that
		\begin{equation}\label{DC_prime}
			|\Delta_k(\xi)|
			\geq
			C\jp{\xi}^{-M},
		\end{equation}
		for every \([\xi]\in\widehat G\) and every \(1\leq k\leq d_\xi\) such that
		\(\Delta_k(\xi)\neq0\).
	\end{definition}	
		
	\begin{proposition}\label{GSsufficient}
		If \(P\) satisfies condition \emph{(DC')}, then \(P\) is globally solvable.
	\end{proposition}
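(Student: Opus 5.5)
We must show two inclusions: \(P(\DD(G))\subset\mathcal A\) and \(\mathcal A\subset P(\DD(G))\). The first is essentially already done. If \(f=Pu\), then identity \eqref{cramer} holds for every entry. At any zero of \(\Delta_k(\xi)\) its left-hand side vanishes, which is exactly (A1). On the finitely many self-dual exceptional classes, the coefficients of \(Pu\) are by definition in the range of the induced finite-dimensional system, which is (A2). So the real content is the reverse inclusion. For \(f\in\mathcal A\), we take the family \(x(\xi)\) constructed above via \eqref{nonzero_assignment}, \eqref{zero_assignment} and the choice on the exceptional classes, and we verify two things. First, \(x\) has polynomial growth, so it is the Fourier transform of some \(u\in\DD(G)\). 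Second, \(Pu=f\).

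\textbf{Growth.} Since \(f\in\DD(G)\), there are \(M_0,N_0\) with \(|\widehat f(\xi)_{k\ell}|\le M_0\jp{\xi}^{N_0}\). The same bound holds for \(\widehat{\overline f}(\xi)=\overline{\widehat f(\overline\xi)}\), because \(\jp{\overline\xi}=\jp\xi\). Where \(\Delta_k(\xi)\neq0\), condition (DC') together with \eqref{condsimbolo} gives
\[
|x(\xi)_{k\ell}|\le C^{-1}\jp\xi^{M}\big((C_\sigma+|q|)\jp\xi^{K}+|p|\big)M_0\jp\xi^{N_0},
\]
which is polynomial. On the zero set, \eqref{zero_assignment} gives \(|x|\le|p|^{-1}M_0\jp\xi^{N_0}\). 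The exceptional classes are finitely many, so they only enlarge the constant. Hence \eqref{dist} holds and \(u\) exists. The point here is that (DC') is imposed at all nonzero values, not only for \(\jp\xi\ge R\); this is what allows the global bound without further work.

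\textbf{Verification of \(Pu=f\).} This is entrywise on the system \eqref{subsystem}. Here \(\widehat{\overline u}(\xi)_{k\ell}=\overline{x(\overline\xi)_{k\ell}}\), and we use that \(\sigma_k(\overline\xi)=\overline{\sigma_k(\xi)}\) and \(\Delta_k(\overline\xi)=\overline{\Delta_k(\xi)}\).

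When \(\Delta_k(\xi)\ne0\), the class \(\overline\xi\) also has nonzero determinant. Conjugating the formula for \(x(\overline\xi)_{k\ell}\) gives
\[
\overline{x(\overline\xi)_{k\ell}}=\frac{(\sigma_k(\xi)-q)\widehat{\overline f}(\xi)_{k\ell}+\overline p\,\widehat f(\xi)_{k\ell}}{\Delta_k(\xi)}.
\]
This is exactly the Cramer solution for \(\widehat{\overline u}\). Plugging both expressions into the first equation of \eqref{subsystem} returns \(\widehat f(\xi)_{k\ell}\), which is a routine \(2\times2\) check.

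When \([\xi]\in\mathcal Z_k^+\), we have \(x(\xi)_{k\ell}=0\) and \(\overline{x(\overline\xi)_{k\ell}}=-\widehat f(\xi)_{k\ell}/p\). The first equation then reads \(-p\cdot(-\widehat f(\xi)_{k\ell}/p)=\widehat f(\xi)_{k\ell}\), as required. At the conjugate class \(\overline\xi\in\mathcal Z_k^-\), the equation is
\[
(\overline{\sigma_k(\xi)}-q)\,x(\overline\xi)_{k\ell}-p\,\overline{x(\xi)_{k\ell}}=\widehat f(\overline\xi)_{k\ell},
\]
and its left side equals \(-(\overline{\sigma_k(\xi)}-q)\overline{\widehat f(\xi)_{k\ell}}/\overline p\). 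Multiplying by \(-\overline p\) and conjugating, the required identity becomes
\[
(\sigma_k(\xi)-\overline q)\widehat f(\xi)_{k\ell}+p\,\overline{\widehat f(\overline\xi)_{k\ell}}=0,
\]
which is precisely (A1). The exceptional classes hold by the choice made there.

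This zero-set bookkeeping is the main obstacle. In particular, we must check that the decomposition into \(\mathcal Z_k^\pm\) really pairs \([\xi]\) with \([\overline\xi]\) using the same index \(k\) under the chosen representatives. This relies on the convention \(\sigma_L(\overline\xi)=\overline{\sigma_L(\xi)}\) from Definition~\ref{diagonal}.
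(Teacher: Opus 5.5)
Your proof is correct and follows essentially the same route as the paper: you bound the family \(x(\xi)\) polynomially using (DC') off the zero set and \eqref{zero_assignment} on it, then verify the \(2\times2\) system entrywise, invoking (A1) exactly at the conjugate class \([\overline\xi]\in\mathcal Z_k^-\). The only addition is your explicit check of the inclusion \(P(\DD(G))\subset\mathcal A\), which the paper does not repeat in the proof but leaves to the derivation of the compatibility condition preceding the definition of \(\mathcal A\).
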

	
	\begin{proof}
		Assume that condition \emph{(DC')} holds, and let \(C>0\) and \(M>0\) be as
		in Definition~\ref{DCprime}. Let \(f\in\mathcal A\). We shall construct
		\(u\in\DD(G)\) such that \(Pu=f\).
		
		We use the family of matrices
		\[
		x(\xi)=\big(x(\xi)_{k\ell}\big)_{1\leq k,\ell\leq d_\xi},
		\qquad [\xi]\in\widehat G,
		\]
		defined by \eqref{nonzero_assignment} and \eqref{zero_assignment}, together
		with the prescribed solutions on the finitely many self-dual exceptional
		classes.
		
		Since \(f\in\DD(G)\), there exist constants \(C_f>0\) and \(N_f>0\) such that
		\[
		|\widehat f(\xi)_{k\ell}|
		\leq
		C_f\jp{\xi}^{N_f},
		\]
		for all \([\xi]\in\widehat G\) and all \(1\leq k,\ell\leq d_\xi\).
		
		Suppose first that \(\Delta_k(\xi)=0\) and \([\xi]\in\mathcal Z_k^+\). Then
		\[
		x(\xi)_{k\ell}=0,
		\qquad
		x(\overline\xi)_{k\ell}
		=
		-\frac{1}{\overline p}\,
		\overline{\widehat f(\xi)_{k\ell}}.
		\]
		
		Hence, using \(\jp{\overline\xi}=\jp{\xi}\), we obtain
		\[
		|x(\overline\xi)_{k\ell}|
		\leq
		\frac{C_f}{|p|}\jp{\overline\xi}^{N_f}.
		\]
		
		Now suppose that \(\Delta_k(\xi)\neq0\). By condition \emph{(DC')}, the
		symbol estimate \eqref{condsimbolo}, and the polynomial growth of
		\(\widehat f\), we have
		\begin{align*}
			|x(\xi)_{k\ell}|
			&\leq
			\frac{1}{|\Delta_k(\xi)|}
			\left[
			(|\sigma_k(\xi)|+|q|)
			|\widehat f(\xi)_{k\ell}|
			+
			|p|\,|\widehat f(\overline\xi)_{k\ell}|
			\right]
			\\
			&\leq
			\frac{1}{C}
			\jp{\xi}^{M}
			\left[
			(C_\sigma\jp{\xi}^{K}+|q|)
			C_f\jp{\xi}^{N_f}
			+
			|p|C_f\jp{\xi}^{N_f}
			\right]
			\\
			&\leq
			\frac{C_f}{C}
			(C_\sigma+|q|+|p|)
			\jp{\xi}^{M+K+N_f}.
		\end{align*}
		
		On the finitely many self-dual exceptional classes, the coefficients are chosen
		by solving the corresponding finite-dimensional algebraic systems. Since there
		are only finitely many such classes, their contribution can be absorbed into
		the constant in the polynomial estimate.
		
		Therefore, there exists a constant \(\widetilde C>0\) such that
		\[
		|x(\xi)_{k\ell}|
		\leq
		\widetilde C\jp{\xi}^{M+K+N_f},
		\]
		for all \([\xi]\in\widehat G\) and all \(1\leq k,\ell\leq d_\xi\). By the
		Fourier characterization of distributions, there exists \(u\in\DD(G)\) such
		that
		\[
		\widehat u(\xi)_{k\ell}=x(\xi)_{k\ell}.
		\]
		
		It remains to verify that \(Pu=f\). We first consider the case \(\Delta_k(\xi)\neq0\). By the definition of
		\(x(\xi)_{k\ell}\), we have
		\[
		\widehat u(\xi)_{k\ell}
		=
		\frac{
			(\sigma_k(\xi)-\overline q)\widehat f(\xi)_{k\ell}
			+
			p\,\widehat{\overline f}(\xi)_{k\ell}
		}{
			\Delta_k(\xi)
		}.
		\]
		
		Moreover, using the definition of \(x(\overline\xi)_{k\ell}\), together with
		\[
		\sigma_k(\overline\xi)=\overline{\sigma_k(\xi)},
		\qquad
		\Delta_k(\overline\xi)=\overline{\Delta_k(\xi)},
		\]
		we obtain
		\[
		\widehat{\overline u}(\xi)_{k\ell}
		=
		\frac{
			(\sigma_k(\xi)-q)\widehat{\overline f}(\xi)_{k\ell}
			+
			\overline p\,\widehat f(\xi)_{k\ell}
		}{
			\Delta_k(\xi)
		}.
		\]
		
		Therefore,
		\begin{align*}
			\widehat{Pu}(\xi)_{k\ell}
			&=
			(\sigma_k(\xi)-q)\widehat u(\xi)_{k\ell}
			-
			p\,\widehat{\overline u}(\xi)_{k\ell}
			\\
			&=
			\frac{
				(\sigma_k(\xi)-q)
				\big[
				(\sigma_k(\xi)-\overline q)\widehat f(\xi)_{k\ell}
				+
				p\,\widehat{\overline f}(\xi)_{k\ell}
				\big]
			}{
				\Delta_k(\xi)
			}
			\\
			&\quad
			-
			\frac{
				p\big[
				(\sigma_k(\xi)-q)\widehat{\overline f}(\xi)_{k\ell}
				+
				\overline p\,\widehat f(\xi)_{k\ell}
				\big]
			}{
				\Delta_k(\xi)
			}
			\\
			&=
			\frac{
				\big[
				(\sigma_k(\xi)-q)(\sigma_k(\xi)-\overline q)
				-|p|^2
				\big]
				\widehat f(\xi)_{k\ell}
			}{
				\Delta_k(\xi)
			}
			\\
			&=
			\widehat f(\xi)_{k\ell}.
		\end{align*}

		Hence
		\[
		\widehat{Pu}(\xi)_{k\ell}
		=
		\widehat f(\xi)_{k\ell}
		\]
		whenever \(\Delta_k(\xi)\neq0\).
		
		Now suppose that \(\Delta_k(\xi)=0\) and, without loss of generality, that
		\([\xi]\in\mathcal Z_k^+\). Then
		\[
		\widehat u(\xi)_{k\ell}=0,
		\qquad
		\widehat u(\overline\xi)_{k\ell}
		=
		-\frac{1}{\overline p}\,
		\overline{\widehat f(\xi)_{k\ell}}.
		\]
		
		Using
		\[
		\widehat{\overline u}(\xi)_{k\ell}
		=
		\overline{\widehat u(\overline\xi)_{k\ell}},
		\]
		we obtain
		\begin{align*}
			\widehat{Pu}(\xi)_{k\ell}
			&=
			(\sigma_k(\xi)-q)\widehat u(\xi)_{k\ell}
			-
			p\,\widehat{\overline u}(\xi)_{k\ell}
			\\
			&=
			-p\,
			\overline{
				-\frac{1}{\overline p}
				\overline{\widehat f(\xi)_{k\ell}}
			}
			\\
			&=
			\widehat f(\xi)_{k\ell}.
		\end{align*}
		
		For the conjugate class, using
		\[
		\sigma_k(\overline\xi)=\overline{\sigma_k(\xi)}
		\]
		and \(\widehat u(\xi)_{k\ell}=0\), we get
		\begin{align*}
			\widehat{Pu}(\overline\xi)_{k\ell}
			&=
			(\sigma_k(\overline\xi)-q)
			\widehat u(\overline\xi)_{k\ell}
			-
			p\,\widehat{\overline u}(\overline\xi)_{k\ell}
			\\
			&=
			(\overline{\sigma_k(\xi)}-q)
			\left(
			-\frac{1}{\overline p}
			\overline{\widehat f(\xi)_{k\ell}}
			\right).
		\end{align*}

		Since \(f\in\mathcal A\), the compatibility condition gives
		\[
		(\sigma_k(\xi)-\overline q)\widehat f(\xi)_{k\ell}
		=
		-p\,\overline{\widehat f(\overline\xi)_{k\ell}}.
		\]
		
		Taking complex conjugates, we obtain
		\[
		(\overline{\sigma_k(\xi)}-q)
		\overline{\widehat f(\xi)_{k\ell}}
		=
		-\overline p\,\widehat f(\overline\xi)_{k\ell}.
		\]
		
		Therefore,
		\[
		\widehat{Pu}(\overline\xi)_{k\ell}
		=
		\widehat f(\overline\xi)_{k\ell}.
		\]
		
		On the finitely many self-dual exceptional classes, the equality
		\[
		\widehat{Pu}(\xi)=\widehat f(\xi)
		\]
		holds by the way the coefficients were chosen from the corresponding
		finite-dimensional algebraic systems.
		
		Hence
		\[
		\widehat{Pu}(\xi)=\widehat f(\xi)
		\qquad
		\text{for every }[\xi]\in\widehat G.
		\]
		Consequently, \(Pu=f\). Since \(f\in\mathcal A\) was arbitrary, \(P\) is
		globally solvable.
	\end{proof}
	
	We can now state the main global solvability result of this subsection. It shows
	that, under the asymptotic non-self-duality assumption, condition
	\emph{(DC')} is not only sufficient but also necessary for solvability.
	
	\begin{theorem}\label{main_GS}
		Let \(L:\DD(G)\to\DD(G)\) be a diagonal operator in the sense of
		Definition~\ref{diagonal}, and let \(p,q\in\C\) with \(p\neq0\). Consider
		the associated Vekua-type operator
		\[
		Pu=Lu-q\,u-p\,\overline u.
		\]
		Suppose that, outside a finite subset of \(\widehat G\), all irreducible
		representations of \(G\) are non-self-dual. Then \(P\) is globally solvable
		if and only if \(P\) satisfies condition \emph{(DC')}.
	\end{theorem}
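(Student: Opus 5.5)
We argue by contraposition for necessity, since sufficiency is exactly Proposition~\ref{GSsufficient}. So assume that \(P\) is globally solvable but condition \emph{(DC')} fails. The plan is to produce an admissible datum \(f\in\mathcal A\) such that identity \eqref{cramer}, which every distributional solution must satisfy, forces \(\widehat u\) to grow faster than any polynomial. This contradicts the Fourier characterization of \(\DD(G)\).

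\emph{Step 1: extracting a bad sequence.} For each bound \(R\) only finitely many classes satisfy \(\jp{\xi}\leq R\). Hence the nonzero values \(|\Delta_k(\xi)|\) with \(\jp{\xi}\le R\) have a positive minimum. The failure of \emph{(DC')} therefore yields mutually distinct classes \([\xi_n]\) and indices \(k_n\) with
\[
\jp{\xi_n}>n,\qquad 0<|\Delta_{k_n}(\xi_n)|<\jp{\xi_n}^{-n}.
\]
As in the proof of Proposition~\ref{DCnecessary}, we pass to a subsequence so that every \([\xi_n]\) is non-self-dual (only finitely many classes are self-dual) and \([\xi_n]\neq[\overline{\xi_m}]\) for all \(n,m\). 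Since \(\Delta_{k_n}(\overline{\xi_n})=\overline{\Delta_{k_n}(\xi_n)}\neq0\), none of the classes \([\xi_n]\) or \([\overline{\xi_n}]\) is a zero of the determinant at index \(k_n\).

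\emph{Step 2: the datum.} Define \(f\) by its Fourier coefficients:
\[
\widehat f(\overline{\xi_n})_{k_nk_n}=1 \quad\text{for all } n,
\]
with all other coefficients equal to zero, including those at the classes \([\xi_n]\). The coefficients are bounded, so \(f\in\DD(G)\). We then check that \(f\in\mathcal A\).
\begin{enumerate}
\item[(i)] Condition (A2) holds because \(f\) vanishes on the finitely many self-dual classes, and zero lies in the range of any linear system.
\item[(ii)] Condition (A1) only involves pairs \((\xi,k)\) with \(\Delta_k(\xi)=0\). The only nonzero coefficients of \(f\) sit at \((\overline{\xi_n},k_n)\). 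There \(\Delta_{k_n}(\overline{\xi_n})\neq0\), and also \(\Delta_{k_n}(\xi_n)\neq0\), so the nonzero coefficient at \(\overline{\xi_n}\) never enters the compatibility condition \eqref{compatibility_condition} at the conjugate class. All remaining compatibility conditions are trivially satisfied, since every coefficient involved is zero.
\end{enumerate}

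\emph{Step 3: the contradiction.} By global solvability there is \(u\in\DD(G)\) with \(Pu=f\). Applying \eqref{cramer} at \((\xi_n,k_n,k_n)\), and using \(\widehat f(\xi_n)_{k_nk_n}=0\) together with \(\widehat{\overline f}(\xi_n)=\overline{\widehat f(\overline{\xi_n})}\), gives
\[
\Delta_{k_n}(\xi_n)\,\widehat u(\xi_n)_{k_nk_n}=p.
\]
Hence
\[
|\widehat u(\xi_n)_{k_nk_n}|=\frac{|p|}{|\Delta_{k_n}(\xi_n)|}>|p|\,\jp{\xi_n}^{n}.
\]
Since \(p\neq0\) and \(\jp{\xi_n}\to\infty\), this cannot be bounded by \(M\jp{\xi}^N\) for any fixed \(M\) and \(N\). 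This contradicts \(u\in\DD(G)\), so condition \emph{(DC')} must hold.

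The only delicate point is Step 2. The datum must carry no nonzero coefficient at the zero set of \(\Delta_{k}\) that could violate (A1). Placing the coefficient on the conjugate class \(\overline{\xi_n}\) achieves this. It also avoids dividing by \(\sigma_{k_n}(\xi_n)-\overline q\), which could itself be small, because the numerator in \eqref{cramer} then reduces to the constant \(p\).
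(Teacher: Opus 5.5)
Your proposal is correct and follows the paper's proof essentially step for step. Both use the same bad sequence with non-self-dual classes satisfying $[\xi_n]\neq[\overline{\xi_m}]$, a datum supported only at $(\overline{\xi_n},k_n,k_n)$, the same admissibility check, and the same super-polynomial growth contradiction from \eqref{cramer}. The only differences are cosmetic: you put the value $1$ where the paper puts $\overline p^{-1}$, so the right-hand side becomes $p$ rather than $1$. You also argue explicitly that the failure of \emph{(DC')} yields classes with $\jp{\xi_n}>n$, a step the paper leaves implicit.
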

	
	\begin{proof}
		By Proposition~\ref{GSsufficient}, condition \emph{(DC')} implies global
		solvability.
		
		Conversely, suppose that condition \emph{(DC')} does not hold. We shall
		construct an admissible datum for which no distributional solution exists.
		
		Since \emph{(DC')} fails, there exist a sequence of representation classes
		\([\xi_n]\in\widehat G\) and indices \(1\leq k_n\leq d_{\xi_n}\) such that
		\[
		\jp{\xi_n}>n,
		\qquad
		0<
		|\Delta_{k_n}(\xi_n)|
		<
		\jp{\xi_n}^{-n},
		\qquad n\in\N.
		\]

		Since only finitely many irreducible representations are self-dual, we may
		discard finitely many terms and assume that all classes \([\xi_n]\) are
		non-self-dual. Passing to a subsequence if necessary, we may also assume that
		\[
		[\xi_n]\neq[\xi_m],
		\qquad
		[\xi_n]\neq[\overline{\xi_m}],
		\qquad
		m\neq n.
		\]
		
		Define a family of matrices
		\[
		z(\xi)=\big(z(\xi)_{k\ell}\big)_{1\leq k,\ell\leq d_\xi},
		\qquad [\xi]\in\widehat G,
		\]
		by
		\[
		z(\xi)_{k\ell}
		:=
		\begin{cases}
			\overline p^{-1},
			& \text{if }[\xi]=[\overline{\xi_n}]
			\text{ and }k=\ell=k_n, \\[0.3em]
			0,
			& \text{otherwise}.
		\end{cases}
		\]
		
		Since this family is bounded, it has polynomial growth. Hence, by the
		Fourier characterization of distributions, there exists \(f\in\DD(G)\) such
		that
		\[
		\widehat f(\xi)_{k\ell}=z(\xi)_{k\ell}.
		\]
		
		We claim that \(f\in\mathcal A\). First, \(f\) has no nonzero Fourier
		coefficients on the finitely many self-dual exceptional classes, after
		discarding finitely many terms from the sequence. Hence the finite-dimensional
		compatibility conditions on those classes are satisfied.
		
		Now let \([\eta]\in\widehat G\) be a non-self-dual class such that
		\(\Delta_j(\eta)=0\) for some \(1\leq j\leq d_\eta\). We must verify the
		compatibility condition
		\[
		(\sigma_j(\eta)-\overline q)\widehat f(\eta)_{j\ell}
		+
		p\,\overline{\widehat f(\overline\eta)_{j\ell}}
		=
		0.
		\]
		
		The first term can be nonzero only if \([\eta]=[\overline{\xi_n}]\) and
		\(j=\ell=k_n\) for some \(n\). But this is impossible, because
		\[
		\Delta_{k_n}(\overline{\xi_n})
		=
		\overline{\Delta_{k_n}(\xi_n)}
		\neq0.
		\]
		
		The second term can be nonzero only if
		\([\overline\eta]=[\overline{\xi_n}]\), equivalently
		\([\eta]=[\xi_n]\), and \(j=\ell=k_n\). This is also impossible when
		\(\Delta_j(\eta)=0\), since
		\[
		\Delta_{k_n}(\xi_n)\neq0.
		\]
		Therefore both terms vanish at every zero of the determinant, and so
		\(f\in\mathcal A\).
		
		We now prove that there is no \(u\in\DD(G)\) satisfying \(Pu=f\). Suppose, by
		contradiction, that such a distribution \(u\) exists. Applying \eqref{cramer}
		at the class \([\xi_n]\) and the entry \((k_n,k_n)\), we get
		\begin{align*}
			\Delta_{k_n}(\xi_n)
			\widehat u(\xi_n)_{k_nk_n}
			&=
			(\sigma_{k_n}(\xi_n)-\overline q)
			\widehat f(\xi_n)_{k_nk_n}
			+
			p\,\overline{\widehat f(\overline{\xi_n})_{k_nk_n}}
			\\
			&=
			0+
			p\,\overline{\overline p^{-1}}
			\\
			&=
			1.
		\end{align*}
		
		Consequently,
		\[
		|\widehat u(\xi_n)_{k_nk_n}|
		=
		\frac{1}{|\Delta_{k_n}(\xi_n)|}
		>
		\jp{\xi_n}^{n}.
		\]
		
		Since \(\jp{\xi_n}\to\infty\), this growth is faster than any polynomial.
		This contradicts the Fourier characterization of distributions on compact Lie
		groups, according to which the Fourier coefficients of a distribution have at
		most polynomial growth.
		
		Thus no distributional solution exists for the admissible datum
		\(f\in\mathcal A\). Therefore \(P\) is not globally solvable. This proves the
		converse implication and completes the proof.
	\end{proof}
	
	We point out that the proof of the sufficiency part gives slightly more than
	distributional solvability. If the admissible datum \(f\) is smooth, then the
	constructed solution is also smooth. Indeed, away from the zero set of
	\(\Delta_k(\xi)\), condition \emph{(DC')} produces only a polynomial loss in the
	Fourier coefficients, while on the zero set the coefficients of the solution are
	either set equal to zero or are given by constant multiples of the Fourier
	coefficients of \(f\). Thus rapid decay is preserved.
	
	\begin{corollary}\label{smooth_GS_general}
		Under the assumptions of Theorem~\ref{main_GS}, the operator \(P\) is
		globally solvable in the smooth category. More precisely,
		\[
		P(C^\infty(G))=\mathcal A\cap C^\infty(G).
		\]
	\end{corollary}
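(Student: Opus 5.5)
We read the statement under the standing hypothesis that \(P\) is globally solvable, which by Theorem~\ref{main_GS} is the same as assuming condition \emph{(DC')} with constants \(C,M>0\). Without this hypothesis the statement cannot hold: the datum built in the necessity part of Theorem~\ref{main_GS} can be replaced by a rapidly decreasing one, and the same argument then yields no solution at all. The plan is to prove the two inclusions separately. For the nontrivial inclusion we reuse the explicit Fourier-side construction from the proof of Proposition~\ref{GSsufficient} and track decay instead of growth.

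\emph{Inclusion \(P(C^\infty(G))\subset\mathcal A\cap C^\infty(G)\).} Let \(u\in C^\infty(G)\). Since \(L\) preserves smooth functions and \(\overline u\in C^\infty(G)\), we get \(Pu\in C^\infty(G)\). Identity \eqref{cramer} holds at every class and entry, so at every zero \(\Delta_k(\xi)=0\) its right-hand side vanishes for \(f=Pu\). This is exactly (A1). On the finitely many self-dual exceptional classes, the coefficients of \(Pu\) are by definition the image of the coefficients of \(u\) under the induced finite-dimensional system, so (A2) holds. Hence \(Pu\in\mathcal A\).

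\emph{Inclusion \(\mathcal A\cap C^\infty(G)\subset P(C^\infty(G))\).} Let \(f\in\mathcal A\cap C^\infty(G)\), and let \(u\) be the distribution produced in the proof of Proposition~\ref{GSsufficient}. Thus \(\widehat u(\xi)=x(\xi)\), with \(x\) given by \eqref{nonzero_assignment}, \eqref{zero_assignment} and arbitrary fixed solutions on the exceptional classes. That proof already gives \(Pu=f\), so it remains to show that \(x\) is rapidly decreasing. Fix \(N>0\). Since \(f\) and \(\overline f\) are smooth, there is \(C_N\) with
\[
|\widehat f(\xi)_{k\ell}|+|\widehat f(\overline\xi)_{k\ell}|\leq C_N\jp{\xi}^{-N-M-K}
\]
for all \([\xi]\) and all entries, where we used \(\jp{\overline\xi}=\jp{\xi}\). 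We then split into three cases.
\begin{itemize}
\item[(i)] If \(\Delta_k(\xi)\neq0\), then (DC') and \eqref{condsimbolo} give, exactly as in the proof of Proposition~\ref{GSsufficient},
\[
|x(\xi)_{k\ell}|\leq C^{-1}(C_\sigma+|q|+|p|)\,C_N\jp{\xi}^{-N}.
\]
\item[(ii)] If \([\xi]\in\mathcal Z_k^+\), then \(x(\xi)_{k\ell}=0\) and \(|x(\overline\xi)_{k\ell}|=|p|^{-1}|\widehat f(\xi)_{k\ell}|\leq |p|^{-1}C_N\jp{\overline\xi}^{-N}\).
\item[(iii)] The exceptional classes are finitely many, so their coefficients are absorbed into the constant as in the proof of Proposition~\ref{firstprop}.
\end{itemize}
Together these give \(|\widehat u(\xi)_{k\ell}|\leq \widetilde C_N\jp{\xi}^{-N}\) for all \([\xi]\) and all entries. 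Since \(N\) is arbitrary, the Fourier characterization \eqref{smooth1} gives \(u\in C^\infty(G)\).

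The only point needing care is case (ii). There the coefficient at \(\overline\xi\) is controlled by \(f\) at \(\xi\), so the decay estimate must be transported between conjugate classes using \(\jp{\overline\xi}=\jp{\xi}\). Apart from this, and the interpretation of the hypothesis discussed at the start, the argument is a routine refinement of Proposition~\ref{GSsufficient}.
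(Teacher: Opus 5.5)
Your proof is correct and follows the paper's argument. The first inclusion comes from the compatibility conditions forced by \eqref{cramer}, which you spell out more carefully than the paper's bare appeal to the definition of \(\mathcal A\); the second reuses the construction from Proposition~\ref{GSsufficient} and observes that \emph{(DC')} costs only a polynomial factor off the zero set, while on the zero set the coefficients are zero or constant multiples of those of \(f\). Your remark that condition \emph{(DC')} must be in force is also right, and the paper's proof relies on it implicitly in the same way, since the cited construction only yields a distribution under \emph{(DC')}.
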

	
	\begin{proof}
		The inclusion
		\[
		P(C^\infty(G))\subset \mathcal A\cap C^\infty(G)
		\]
		follows from the definition of \(\mathcal A\) and from the fact that \(P\)
		preserves smooth functions. Conversely, if \(f\in\mathcal A\cap C^\infty(G)\),
		the construction in the proof of Proposition~\ref{GSsufficient} gives a
		solution \(u\in\DD(G)\). Since \(f\) is smooth, its Fourier coefficients are
		rapidly decreasing. The estimates used in that proof show that the Fourier
		coefficients of \(u\) are also rapidly decreasing, because condition
		\emph{(DC')} causes only a polynomial loss away from the zero set, while on
		the zero set the coefficients of \(u\) are either zero or constant multiples
		of coefficients of \(f\). Hence \(u\in C^\infty(G)\), and therefore
		\[
		\mathcal A\cap C^\infty(G)\subset P(C^\infty(G)).
		\]
		This proves the equality.
	\end{proof}

	\section{The Self-Dual Structure of Vekua-Type Operators on \(\mathbb S^3\)}
	\label{Section_S3}
	
	We now analyze the case of the three-dimensional sphere
	\[
	\mathbb S^3
	:=
	\{x\in\R^4:\|x\|_2=1\}
	\cong SU(2).
	\]
	Throughout this section, \(\mathbb S^3\) is endowed with the compact Lie group
	structure inherited from \(SU(2)\), together with its normalized Haar measure.
	
	This case is not covered by the non-self-duality assumptions used in the
	preceding results. Indeed, every irreducible representation of \(SU(2)\) is
	self-dual. Thus \(\mathbb S^3\) provides the simplest non-commutative model in
	which the conjugation term cannot be treated by separating two distinct
	representation classes \([\xi]\) and \([\overline\xi]\).
	
	The representation-theoretic mechanism that appears here is not accidental. In
	the representation theory of compact connected semisimple Lie groups,
	self-duality is governed by the action of the longest Weyl group element on the
	weight lattice. In this sense, the analysis of
	\(\mathbb S^3\simeq SU(2)\) should be viewed as a model for the self-dual side
	of the theory, where the conjugation term acts inside representation blocks
	rather than between distinct blocks.
	
	The analysis below relies on the explicit Fourier structure of \(SU(2)\). In
	this setting, the conjugation symmetry relates the matrix entries
	\[
	(m,n)
	\quad\text{and}\quad
	(-m,-n)
	\]
	within each representation block. This internal symmetry allows us to derive a
	blockwise \(2\times2\) system analogous to the one obtained in the
	non-self-dual case, with the conjugate representation replaced by the symmetric
	entry in the same block.
	
	The unitary dual of \(\mathbb S^3\simeq SU(2)\) is naturally parametrized by
	\(\frac12\N_0\). For each \(\ell\in\frac12\N_0\), let
	\[
	t^\ell:\mathbb S^3\to U(2\ell+1)
	\]
	denote the corresponding irreducible unitary representation. We write its
	matrix entries as
	\[
	t^\ell_{mn},
	\qquad
	m,n\in J_\ell,
	\]
	where
	\[
	J_\ell:=\{-\ell,-\ell+1,\ldots,\ell-1,\ell\}.
	\]
	
	Let \(\{Y_1,Y_2,Y_3\}\) be the standard basis of the Lie algebra
	\(\mathfrak{su}(2)\) satisfying
	\[
	[Y_1,Y_2]=Y_3,
	\qquad
	[Y_2,Y_3]=Y_1,
	\qquad
	[Y_3,Y_1]=Y_2.
	\]

	We denote by \(D_1,D_2,D_3\) the corresponding left-invariant differential
	operators on \(\mathbb S^3\). With our convention for the matrix coefficients
	of \(SU(2)\), one has
	\[
	D_3(t^\ell_{mn})=-in\,t^\ell_{mn}.
	\]
	
	After introducing the complexified vector fields
	\[
	\partial_+:=iD_1-D_2,
	\qquad
	\partial_-:=iD_1+D_2,
	\qquad
	\partial_0:=iD_3,
	\]
	we have
	\begin{align}\label{tabela}
		\partial_+(t^\ell_{mn})
		&=
		-\sqrt{(\ell-n)(\ell+n+1)}\,t^\ell_{m,n+1},
		\nonumber\\
		\partial_-(t^\ell_{mn})
		&=
		-\sqrt{(\ell+n)(\ell-n+1)}\,t^\ell_{m,n-1},
		\nonumber\\
		\partial_0(t^\ell_{mn})
		&=
		n\,t^\ell_{mn}.
	\end{align}
	
	The self-duality of the representations is reflected in the following identity.
	For every \(\ell\in\frac12\N_0\), the conjugate representation
	\(\overline{t^\ell}\) is equivalent to \(t^\ell\), and, in the standard basis,
	one has
	\[
	\overline{t^\ell(x)_{mn}}
	=
	(-1)^{m-n}t^\ell(x)_{-m,-n}.
	\]

	Consequently, the Fourier coefficients of a distribution satisfy
	\begin{equation}\label{conj_S3}
		\widehat{\overline u}(\ell)_{mn}
		=
		(-1)^{m-n}
		\overline{\widehat u(\ell)_{-m,-n}},
	\end{equation}
	for all \(u\in\DD(\mathbb S^3)\), all
	\(\ell\in\frac12\N_0\), and all \(m,n\in J_\ell\).
	
	Thus, in the Fourier analysis of Vekua-type equations on \(\mathbb S^3\), the
	conjugation term does not connect different representation classes. Instead, it
	couples the symmetric entries
	\[
	(m,n)
	\quad\text{and}\quad
	(-m,-n)
	\]
	within the same representation block. This internal coupling replaces the pair
	\([\xi]\) and \([\overline\xi]\) used in the previous section.

	\subsection{Global Hypoellipticity} \
	\label{Subsection_S3_GH}
		
	We start the study of global hypoellipticity for Vekua-type operators on
	\(\mathbb S^3\). The first step is to derive the Fourier-side system associated
	with the equation \(Pu=f\) in the self-dual setting.
	
	Let
	\[
	L:\DD(\mathbb S^3)\to\DD(\mathbb S^3)
	\]
	be a continuous left-invariant operator preserving smooth functions. We assume
	that its symbol is diagonal in the basis described above. Thus, for each
	\(\ell\in\frac12\N_0\), we write
	\[
	\sigma_L(\ell)
	=
	\operatorname{diag}
	\big(\sigma_m(\ell)\big)_{m\in J_\ell}.
	\]
	
	Let \(p,q\in\C\), with \(p\neq0\), and consider the corresponding Vekua-type
	operator
	\[
	Pu=Lu-q\,u-p\,\overline u.
	\]
	
	Since the symbol of \(L\) is diagonal, we have
	\[
	\widehat{Lu}(\ell)_{mn}
	=
	\sigma_m(\ell)\widehat u(\ell)_{mn}.
	\]
	
	Moreover, by \eqref{conj_S3},
	\[
	\widehat{\overline u}(\ell)_{mn}
	=
	(-1)^{m-n}
	\overline{\widehat u(\ell)_{-m,-n}}.
	\]
	
	Here \(m-n\in\mathbb Z\), so the factor \((-1)^{m-n}\) is well defined.
	Therefore, the Fourier coefficient of \(Pu\) at the entry \((m,n)\) is
	\begin{equation}\label{eqn1}
		(\sigma_m(\ell)-q)\widehat u(\ell)_{mn}
		-
		p(-1)^{m-n}
		\overline{\widehat u(\ell)_{-m,-n}}
		=
		\widehat{Pu}(\ell)_{mn}.
	\end{equation}
	
	Now we write the same identity at the symmetric entry \((-m,-n)\). Since
	\[
	(-1)^{-m+n}=(-1)^{m-n},
	\]
	we get
	\[
	(\sigma_{-m}(\ell)-q)\widehat u(\ell)_{-m,-n}
	-
	p(-1)^{m-n}
	\overline{\widehat u(\ell)_{mn}}
	=
	\widehat{Pu}(\ell)_{-m,-n}.
	\]
	Taking complex conjugates gives
	\[
	\big(\overline{\sigma_{-m}(\ell)}-\overline q\big)
	\overline{\widehat u(\ell)_{-m,-n}}
	-
	\overline p\,(-1)^{m-n}
	\widehat u(\ell)_{mn}
	=
	\overline{\widehat{Pu}(\ell)_{-m,-n}}.
	\]

	Multiplying this identity by \((-1)^{m-n}\), we obtain
	\begin{equation}\label{eqn2}
		-\overline p\,\widehat u(\ell)_{mn}
		+
		\big(\overline{\sigma_{-m}(\ell)}-\overline q\big)
		\left[
		(-1)^{m-n}
		\overline{\widehat u(\ell)_{-m,-n}}
		\right]
		=
		(-1)^{m-n}
		\overline{\widehat{Pu}(\ell)_{-m,-n}}.
	\end{equation}
	
	Therefore, for each fixed \(\ell\) and each pair of symmetric entries
	\[
	(m,n)
	\quad\text{and}\quad
	(-m,-n),
	\]
	we obtain the \(2\times2\) system
	\[
	\left\{
	\begin{array}{l}
		(\sigma_m(\ell)-q)\widehat u(\ell)_{mn}
		-
		p\left[
		(-1)^{m-n}
		\overline{\widehat u(\ell)_{-m,-n}}
		\right]
		=
		\widehat{Pu}(\ell)_{mn},
		\\[0.4em]
		-\overline p\,\widehat u(\ell)_{mn}
		+
		\big(\overline{\sigma_{-m}(\ell)}-\overline q\big)
		\left[
		(-1)^{m-n}
		\overline{\widehat u(\ell)_{-m,-n}}
		\right]
		=
		(-1)^{m-n}
		\overline{\widehat{Pu}(\ell)_{-m,-n}}.
	\end{array}
	\right.
	\]
	
	The unknowns are
	\[
	\widehat u(\ell)_{mn}
	\quad\text{and}\quad
	(-1)^{m-n}\overline{\widehat u(\ell)_{-m,-n}}.
	\]
	
	When \((m,n)=(0,0)\), the symmetric entry coincides with the original one; in
	that case the same system should be understood as the real-linear system
	relating \(\widehat u(\ell)_{00}\) and its complex conjugate.
	
	The determinant of this system is
	\begin{equation}\label{Delta_S3}
		\Delta_m(\ell)
		:=
		(\sigma_m(\ell)-q)
		\big(\overline{\sigma_{-m}(\ell)}-\overline q\big)
		-
		|p|^2.
	\end{equation}
	
	We define the corresponding zero set by
	\begin{equation}\label{Z_S3}
		\mathcal Z_{\mathbb S^3}
		:=
		\left\{
		\ell\in\frac12\N_0:
		\Delta_m(\ell)=0
		\text{ for some }m\in J_\ell
		\right\}.
	\end{equation}
	
	The determinant \(\Delta_m(\ell)\) plays, in the self-dual setting of
	\(\mathbb S^3\), the same role that \(\Delta_k(\xi)\) plays in the
	non-self-dual analysis. In the study of global hypoellipticity, it is the small
	divisor that controls whether rapid decay of the Fourier coefficients of
	\(Pu\) can be transferred to the Fourier coefficients of \(u\).
	
	We first record the algebraic obstruction produced by infinitely many zeros of
	this determinant.
	
	\begin{proposition}\label{ZinfiniteSU2} 
		If the set \(\mathcal Z_{\mathbb S^3}\) is infinite, then \(P\) is not
		globally hypoelliptic. 
	\end{proposition}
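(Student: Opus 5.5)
The plan is to imitate the proof of Proposition~\ref{Zinfinite}, with the pair of conjugate classes \([\xi_n]\), \([\overline{\xi_n}]\) replaced by a pair of symmetric entries \((m,n)\) and \((-m,-n)\) inside a single block \(t^{\ell}\). The goal is a distribution \(u\in\DD(\mathbb S^3)\setminus C^\infty(\mathbb S^3)\) with \(Pu=0\).

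First, since \(\mathcal Z_{\mathbb S^3}\) is infinite, I choose distinct \(\ell_j\in\mathcal Z_{\mathbb S^3}\), with \(\ell_j\ge 1\) after discarding finitely many terms, and indices \(m_j\in J_{\ell_j}\) such that \(\Delta_{m_j}(\ell_j)=0\). I then choose a column index \(n_j\in J_{\ell_j}\) with \((m_j,n_j)\neq(0,0)\). If \(m_j\neq0\), any \(n_j\) will do. If \(m_j=0\), then \(\ell_j\) is an integer \(\ge1\), so \(n_j=1\in J_{\ell_j}\) is available. This guarantees that \((m_j,n_j)\) and \((-m_j,-n_j)\) are distinct entries, so the two unknowns of the \(2\times2\) system are genuinely independent. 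This is exactly the point where the real-linear degeneracy at \((0,0)\) would otherwise cause trouble.

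Next, I pick a kernel vector of the singular matrix
\[
\begin{pmatrix}\sigma_m(\ell)-q & -p\\ -\overline p & \overline{\sigma_{-m}(\ell)}-\overline q\end{pmatrix},
\]
namely \((a,b)=(p,\ \sigma_m(\ell)-q)\). The first row gives \((\sigma_m-q)p-p(\sigma_m-q)=0\). The second row gives \(-|p|^2+(\overline{\sigma_{-m}}-\overline q)(\sigma_m-q)=\Delta_m(\ell)=0\). Since the unknowns are \(\widehat u_{mn}\) and \((-1)^{m-n}\overline{\widehat u_{-m,-n}}\), I set
\[
\widehat u(\ell_j)_{m_jn_j}:=p,\qquad \widehat u(\ell_j)_{-m_j,-n_j}:=(-1)^{m_j-n_j}\,\overline{\sigma_{m_j}(\ell_j)-q},
\]
and all other coefficients equal to zero. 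Polynomial growth of these coefficients follows from a symbol bound \(|\sigma_m(\ell)|\le C_\sigma\jp{\ell}^{K}\), the analogue of \eqref{condsimbolo}, which I assume for \(L\). Hence \(u\in\DD(\mathbb S^3)\). Moreover \(u\) is not smooth, because \(|\widehat u(\ell_j)_{m_jn_j}|=|p|\neq0\) while \(\jp{\ell_j}\to\infty\).

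Finally, I verify \(Pu=0\) using \eqref{eqn1}. At an entry \((m,n)\) of block \(\ell\), \(\widehat{Pu}(\ell)_{mn}\) involves only \(\widehat u(\ell)_{mn}\) and \(\widehat u(\ell)_{-m,-n}\). Hence it vanishes trivially except at the two chosen entries of each block \(\ell_j\).

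At \((m_j,n_j)\), \eqref{eqn1} is precisely the first row applied to \((a,b)\), so it gives \(0\).

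At \((-m_j,-n_j)\), the identity \eqref{eqn1} is, after conjugation and multiplication by \((-1)^{m-n}\), exactly \eqref{eqn2}. That is, it is the second row applied to \((a,b)\), which vanishes because \(\Delta_{m_j}(\ell_j)=0\).

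Therefore \(\widehat{Pu}\equiv0\), so \(Pu=0\in C^\infty(\mathbb S^3)\) while \(u\notin C^\infty(\mathbb S^3)\), and \(P\) is not globally hypoelliptic.

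The main obstacle I expect is the bookkeeping of the sign \((-1)^{m-n}\) and of the conjugations in the second equation. A secondary obstacle is ensuring that the symmetric entry is distinct from the original one, which is handled by the choice of \(n_j\) above.
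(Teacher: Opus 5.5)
Your proposal is correct and is essentially the paper's argument. You build a nonsmooth \(u\) supported on one symmetric pair \((m_j,n_j)\), \((-m_j,-n_j)\) in each block \(\ell_j\), with \(n_j\) chosen so that \((m_j,n_j)\neq(0,0)\), and with coefficients given by a kernel vector of the singular \(2\times2\) system. The only difference is the choice of kernel vector: the paper takes \(\widehat u(\ell_j)_{m_jn_j}=B_j:=\overline{\sigma_{-m_j}(\ell_j)}-\overline q\) and \(\widehat u(\ell_j)_{-m_j,-n_j}=(-1)^{m_j-n_j}p\), i.e.\ the vector \((B_j,\overline p)\), while your \((p,A_j)\) with \(A_j:=\sigma_{m_j}(\ell_j)-q\) is a scalar multiple of it because \(A_jB_j=|p|^2\).
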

	
	\begin{proof}
		Suppose that \(\mathcal Z_{\mathbb S^3}\) is infinite. Then there exist an
		increasing sequence \((\ell_j)_{j\in\N}\) in \(\frac12\N_0\) and indices
		\(m_j\in J_{\ell_j}\) such that
		\[
		\Delta_{m_j}(\ell_j)=0,
		\qquad j\in\N.
		\]
		In particular, \(\ell_j\to\infty\).
		
		We shall construct a nonsmooth distribution \(u\) satisfying \(Pu=0\). In
		order to avoid the fixed point of the involution
		\[
		(m,n)\mapsto(-m,-n),
		\]
		we choose, after discarding finitely many terms if necessary, indices
		\(n_j\in J_{\ell_j}\) such that
		\[
		(m_j,n_j)\neq(-m_j,-n_j).
		\]
		
		This is possible because \(\ell_j\to\infty\). Indeed, if \(m_j\neq0\), we may
		take \(n_j=m_j\); if \(m_j=0\), we choose any nonzero \(n_j\in J_{\ell_j}\).
		
		Set
		\[
		B_j:=\overline{\sigma_{-m_j}(\ell_j)}-\overline q,
		\qquad
		\varepsilon_j:=(-1)^{m_j-n_j}.
		\]
		Here \(m_j-n_j\in\mathbb Z\), and therefore \(\varepsilon_j=\pm1\).
		
		We define the Fourier coefficients of \(u\) only on the two symmetric entries
		\[
		(m_j,n_j)
		\quad\text{and}\quad
		(-m_j,-n_j),
		\]
		by setting
		\[
		\widehat u(\ell)_{mn}
		:=
		\begin{cases}
			B_j,
			& \text{if }\ell=\ell_j,\ m=m_j,\ n=n_j, \\[0.3em]
			\varepsilon_j p,
			& \text{if }\ell=\ell_j,\ m=-m_j,\ n=-n_j, \\[0.3em]
			0,
			& \text{otherwise}.
		\end{cases}
		\]
		
		By the polynomial growth of the symbol of \(L\), the sequence \((B_j)\) has
		at most polynomial growth. Hence the family above defines a distribution
		\(u\in\DD(\mathbb S^3)\).
		
		We claim that \(Pu=0\). For the entry \((m_j,n_j)\), equation \eqref{eqn1}
		gives
		\begin{align*}
			\widehat{Pu}(\ell_j)_{m_jn_j}
			&=
			(\sigma_{m_j}(\ell_j)-q)\widehat u(\ell_j)_{m_jn_j}
			-
			p\varepsilon_j
			\overline{\widehat u(\ell_j)_{-m_j,-n_j}}
			\\
			&=
			(\sigma_{m_j}(\ell_j)-q)B_j
			-
			p\varepsilon_j\overline{\varepsilon_j p}.
		\end{align*}
		
		Since \(\varepsilon_j=\pm1\), we have
		\[
		\varepsilon_j\overline{\varepsilon_j p}
		=
		\varepsilon_j^2\overline p
		=
		\overline p.
		\]
		
		Therefore
		\begin{align*}
			\widehat{Pu}(\ell_j)_{m_jn_j}
			&=
			(\sigma_{m_j}(\ell_j)-q)
			\big(\overline{\sigma_{-m_j}(\ell_j)}-\overline q\big)
			-
			|p|^2
			\\
			&=
			\Delta_{m_j}(\ell_j)
			=
			0.
		\end{align*}
		
		For the symmetric entry \((-m_j,-n_j)\), again using \eqref{eqn1}, we obtain
		\begin{align*}
			\widehat{Pu}(\ell_j)_{-m_j,-n_j}
			&=
			(\sigma_{-m_j}(\ell_j)-q)
			\widehat u(\ell_j)_{-m_j,-n_j}
			-
			p\varepsilon_j
			\overline{\widehat u(\ell_j)_{m_jn_j}}
			\\
			&=
			(\sigma_{-m_j}(\ell_j)-q)\varepsilon_j p
			-
			p\varepsilon_j\overline{B_j}.
		\end{align*}
		
		Since
		\[
		\overline{B_j}
		=
		\sigma_{-m_j}(\ell_j)-q,
		\]
		it follows that
		\[
		\widehat{Pu}(\ell_j)_{-m_j,-n_j}=0.
		\]
		
		For every other entry, both the corresponding coefficient of \(u\) and the
		coefficient at its symmetric entry vanish by construction. Hence all remaining
		Fourier coefficients of \(Pu\) vanish, and therefore
		\[
		Pu=0.
		\]
		
		On the other hand, \(u\) is not smooth. Indeed,
		\[
		|\widehat u(\ell_j)_{-m_j,-n_j}|
		=
		|\varepsilon_j p|
		=
		|p|\neq0
		\]
		for all \(j\), while \(\ell_j\to\infty\). Hence the Fourier coefficients of
		\(u\) are not rapidly decreasing. Therefore
		\[
		u\in\DD(\mathbb S^3)\setminus C^\infty(\mathbb S^3),
		\qquad
		Pu=0\in C^\infty(\mathbb S^3).
		\]
		
		This proves that \(P\) is not globally hypoelliptic.
	\end{proof}

	\begin{definition}\label{DC_S3}
		We say that \(P\) satisfies condition \emph{(DC\(_{\mathbb S^3}\))} if
		there exist constants \(C>0\), \(M>0\), and \(R>0\) such that
		\[
		|\Delta_m(\ell)|
		\geq
		C\jp{\ell}^{-M},
		\]
		for all \(\ell\in\frac12\N_0\) with \(\jp{\ell}\geq R\) and all
		\(m\in J_\ell\).
	\end{definition}
	
	Condition \emph{(DC\(_{\mathbb S^3}\))} immediately implies that the zero set
	\(\mathcal Z_{\mathbb S^3}\) is finite. Indeed, if \(\jp{\ell}\geq R\), then
	\[
	|\Delta_m(\ell)|\geq C\jp{\ell}^{-M}>0
	\]
	for every \(m\in J_\ell\). Thus zeros of \(\Delta_m(\ell)\) may occur only
	among the finitely many representation levels with \(\jp{\ell}<R\).
	
	\begin{theorem}\label{GH_SU2_characterization}
		Let \(L:\DD(\mathbb S^3)\to\DD(\mathbb S^3)\) be a continuous
		left-invariant operator preserving smooth functions. Assume that its symbol
		is diagonal in the standard Fourier basis of \(SU(2)\) and has polynomial
		growth. Let \(p,q\in\C\), with \(p\neq0\), and consider
		\[
		Pu=Lu-q\,u-p\,\overline u.
		\]
		Then \(P\) is globally hypoelliptic on \(\mathbb S^3\) if and only if it
		satisfies condition \emph{(DC\(_{\mathbb S^3}\))}.
	\end{theorem}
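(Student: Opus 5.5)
The proof has two directions, each modelled on the corresponding non-self-dual argument (Propositions~\ref{firstprop} and~\ref{DCnecessary}), with the pair $[\xi],[\overline\xi]$ replaced by the pair of entries $(m,n),(-m,-n)$ inside the block $t^\ell$.

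\emph{Sufficiency.} We solve the blockwise $2\times2$ system by Cramer's rule. Eliminating the second unknown from \eqref{eqn1} and \eqref{eqn2} gives, for every $\ell$ and $m,n\in J_\ell$, the identity
\[
\Delta_m(\ell)\,\widehat u(\ell)_{mn}
=\big(\overline{\sigma_{-m}(\ell)}-\overline q\big)\widehat f(\ell)_{mn}
+p(-1)^{m-n}\overline{\widehat f(\ell)_{-m,-n}},
\]
where $f=Pu$. This is the analogue of \eqref{cramer}, and it is valid also when $(m,n)=(0,0)$.

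Assume (DC$_{\mathbb S^3}$) holds and $f\in C^\infty$. For $\jp{\ell}\ge R$ we divide by $\Delta_m(\ell)$ and use the polynomial growth $|\sigma_{-m}(\ell)|\le C_\sigma\jp{\ell}^K$. Taking the decay of $\widehat f$ of order $N+M+K$ then yields $|\widehat u(\ell)_{mn}|\le C_N\jp{\ell}^{-N}$, exactly as in Proposition~\ref{firstprop}. The finitely many levels with $\jp{\ell}<R$ are absorbed into the constant. Hence $u$ is smooth.

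\emph{Necessity.} We argue by contraposition. If $\mathcal Z_{\mathbb S^3}$ is infinite, Proposition~\ref{ZinfiniteSU2} already shows that $P$ is not globally hypoelliptic. So assume $\mathcal Z_{\mathbb S^3}$ is finite and that (DC$_{\mathbb S^3}$) fails. Then there are strictly increasing levels $\ell_j$ and indices $m_j\in J_{\ell_j}$ with
\[
0<|\Delta_{m_j}(\ell_j)|<\jp{\ell_j}^{-j}.
\]
As in Proposition~\ref{ZinfiniteSU2}, we choose $n_j$ so that $(m_j,n_j)\ne(-m_j,-n_j)$ (take $n_j=m_j$ if $m_j\neq 0$, otherwise any nonzero $n_j$), and set $\varepsilon_j=(-1)^{m_j-n_j}$.

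Define $f$ by
\[
\widehat f(\ell_j)_{m_jn_j}=c\,\Delta_{m_j}(\ell_j),
\]
with the symmetric entry $(-m_j,-n_j)$ chosen consistently and all other coefficients zero. This $f$ is smooth. Mimicking Proposition~\ref{DCnecessary}, define $u_c$ by
\[
\widehat{u_c}(\ell_j)_{m_jn_j}=c\big(\overline{\sigma_{-m_j}(\ell_j)}-\overline q\big)+p\,\overline c,
\qquad
\widehat{u_c}(\ell_j)_{-m_j,-n_j}=\varepsilon_j\big[c\,p+\ldots\big],
\]
where the second entry is fixed so that the conjugate equation \eqref{eqn1} at $(-m_j,-n_j)$ also holds. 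Concretely, we take $u_c=c\,u_1+(\text{the }\mathbb R\text{-linear partner})$: the kernel-type vector $\big(B_j,\ \varepsilon_j p\big)$ from Proposition~\ref{ZinfiniteSU2}, now multiplied by $c$ and corrected by $\overline c$ terms. We then verify $Pu_c=f'$ with $f'$ a smooth datum proportional to $\Delta_{m_j}(\ell_j)$ on both symmetric entries. Equivalently, we prescribe $\widehat{u_c}$ on the two entries and define $f:=Pu_c$. The computation reduces to $f$'s entries equal to $c\,\Delta_{m_j}(\ell_j)$ and $\varepsilon_j\overline c\,\overline{\Delta_{m_j}(\ell_j)}$ (up to harmless constants), which decay rapidly.

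The main obstacle is showing that $u_c$ is not smooth. Suppose it were, so that $\widehat{u_c}(\ell_j)_{m_jn_j}\to 0$. Then $\overline{\sigma_{-m_j}(\ell_j)}\to\overline q-p\,\overline c/c$. However, $\sigma_{m_j}(\ell_j)$ is now not tied to $\sigma_{-m_j}(\ell_j)$, unlike the case $\sigma(\overline\xi)=\overline{\sigma(\xi)}$, so the scalar limiting argument of Proposition~\ref{DCnecessary} does not directly apply. I would handle this by using both entries. If both coefficients of $u_c$ tend to zero for two different choices $s=\overline c/c$, then subtracting the two limits forces $p(s_1-s_2)\to0$, a contradiction since $p\ne0$. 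Hence for at least one of two fixed choices of $c$ (or for $c\in\{1,i\}$), $u_c$ is not smooth while $Pu_c$ is. Alternatively, one can avoid the $c$-trick: if $|\overline{\sigma_{-m_j}(\ell_j)}-\overline q|$ is bounded below along a subsequence, the coefficient $B_j$ of $u=(B_j,\varepsilon_j p)$ already fails to decay; otherwise the entry $\varepsilon_j p$ does not decay, since it has constant modulus $|p|\neq0$. In either case $Pu$ has coefficients $\Delta_{m_j}(\ell_j)$ and $0$, which are rapidly decreasing. This second route seems cleanest, and I would adopt it.
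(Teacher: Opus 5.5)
Your proof is correct and, once you commit to the ``second route,'' it is essentially the paper's argument. Sufficiency goes through the Cramer identity \eqref{cramer_S3_GH} with decay of order $N+M+K$ imposed on $\widehat f$. Necessity uses the distribution with coefficients $B_j$ at $(m_j,n_j)$ and $\varepsilon_j p$ at $(-m_j,-n_j)$, for which $\widehat{Pu}$ equals $\Delta_{m_j}(\ell_j)$ and $0$ on the two entries. The $c$-trick detour and the case split on $|B_j|$ are unnecessary: the entry $\varepsilon_j p$ has modulus $|p|\neq0$ for every $j$, and the paper uses only this to conclude that $u$ is not smooth.
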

	
	\begin{proof}
		We first prove that condition \emph{(DC\(_{\mathbb S^3}\))} implies global
		hypoellipticity. Let \(u\in\DD(\mathbb S^3)\) and suppose that
		\[
		Pu=f\in C^\infty(\mathbb S^3).
		\]
		
		We shall prove that \(u\in C^\infty(\mathbb S^3)\).
		
		Fix \(\ell\in\frac12\N_0\) and \(m,n\in J_\ell\). Set
		\begin{equation}\label{Am_Bm}
			A_m(\ell):=\sigma_m(\ell)-q,
			\qquad
			B_m(\ell):=\overline{\sigma_{-m}(\ell)}-\overline q.
		\end{equation}
		Then
		\[
		\Delta_m(\ell)=A_m(\ell)B_m(\ell)-|p|^2.
		\]
		
		From \eqref{eqn1} and \eqref{eqn2}, the quantities
		\[
		\widehat u(\ell)_{mn}
		\quad\text{and}\quad
		(-1)^{m-n}\overline{\widehat u(\ell)_{-m,-n}}
		\]
		satisfy the linear system
		\[
		\begin{pmatrix}
			A_m(\ell) & -p\\
			-\overline p & B_m(\ell)
		\end{pmatrix}
		\begin{pmatrix}
			\widehat u(\ell)_{mn}\\
			(-1)^{m-n}\overline{\widehat u(\ell)_{-m,-n}}
		\end{pmatrix}
		=
		\begin{pmatrix}
			\widehat f(\ell)_{mn}\\
			(-1)^{m-n}\overline{\widehat f(\ell)_{-m,-n}}
		\end{pmatrix}.
		\]
		Hence, whenever \(\Delta_m(\ell)\neq0\), Cramer's rule gives
		\begin{equation}\label{cramer_S3_GH}
			\Delta_m(\ell)\widehat u(\ell)_{mn}
			=
			B_m(\ell)\widehat f(\ell)_{mn}
			+
			p(-1)^{m-n}\overline{\widehat f(\ell)_{-m,-n}}.
		\end{equation}
		
		Let \(N>0\) be arbitrary. Since the symbol of \(L\) has polynomial growth,
		there exist constants \(C_\sigma>0\) and \(K\in\N_0\) such that
		\[
		|\sigma_m(\ell)|\leq C_\sigma\jp{\ell}^{K},
		\qquad m\in J_\ell.
		\]
		
		Hence, for some constant \(C_B>0\),
		\[
		|B_m(\ell)|
		\leq
		|\sigma_{-m}(\ell)|+|q|
		\leq
		C_B\jp{\ell}^{K}.
		\]
		
		Since \(f\in C^\infty(\mathbb S^3)\), its Fourier coefficients are rapidly
		decreasing. Therefore, after increasing the decay order if necessary, there
		exists \(C_N>0\) such that
		\[
		|\widehat f(\ell)_{mn}|
		\leq
		C_N\jp{\ell}^{-N-M-K},
		\qquad
		|\widehat f(\ell)_{-m,-n}|
		\leq
		C_N\jp{\ell}^{-N-M},
		\]
		for all \(\ell\in\frac12\N_0\) and all \(m,n\in J_\ell\).
		
		Now assume that \(\jp{\ell}\geq R\). By condition
		\emph{(DC\(_{\mathbb S^3}\))},
		\[
		|\Delta_m(\ell)|^{-1}
		\leq
		C^{-1}\jp{\ell}^{M}.
		\]
		
		In particular, \(\Delta_m(\ell)\neq0\), and \eqref{cramer_S3_GH} applies.
		Thus
		\begin{align*}
			|\widehat u(\ell)_{mn}|
			&\leq
			|\Delta_m(\ell)|^{-1}
			\left(
			|B_m(\ell)|\,|\widehat f(\ell)_{mn}|
			+
			|p|\,|\widehat f(\ell)_{-m,-n}|
			\right)
			\\
			&\leq
			C^{-1}\jp{\ell}^{M}
			\left(
			C_B\jp{\ell}^{K} C_N\jp{\ell}^{-N-M-K}
			+
			|p| C_N\jp{\ell}^{-N-M}
			\right)
			\\
			&\leq
			C_N'\jp{\ell}^{-N}.
		\end{align*}
		
		It remains only to deal with the modes satisfying \(\jp{\ell}<R\). There are
		only finitely many such representation levels, and hence only finitely many
		Fourier coefficients are involved. Enlarging \(C_N'\), if necessary, we obtain
		\[
		|\widehat u(\ell)_{mn}|
		\leq
		C_N'\jp{\ell}^{-N},
		\]
		for all \(\ell\in\frac12\N_0\) and all \(m,n\in J_\ell\).
		
		Since \(N>0\) was arbitrary, the Fourier coefficients of \(u\) are rapidly
		decreasing. Thus \(u\in C^\infty(\mathbb S^3)\) and \(P\) is globally hypoelliptic.
		
		Conversely, suppose that condition \emph{(DC\(_{\mathbb S^3}\))} does not
		hold. We shall prove, by contraposition, that \(P\) is not globally
		hypoelliptic.
		
		If the zero set \(\mathcal Z_{\mathbb S^3}\) is infinite, then
		Proposition~\ref{ZinfiniteSU2} already implies that \(P\) is not globally
		hypoelliptic. Hence we may assume that \(\mathcal Z_{\mathbb S^3}\) is finite.
		
		Since condition \emph{(DC\(_{\mathbb S^3}\))} fails and
		\(\mathcal Z_{\mathbb S^3}\) is finite, there exist a sequence
		\(\ell_j\to\infty\) and indices \(m_j\in J_{\ell_j}\) such that
		\[
		0<
		|\Delta_{m_j}(\ell_j)|
		<
		\jp{\ell_j}^{-j},
		\qquad j\in\N.
		\]
		
		After passing to a subsequence if necessary, we choose \(n_j\in J_{\ell_j}\)
		such that
		\[
		(m_j,n_j)\neq(-m_j,-n_j)
		\]
		for every \(j\). This is possible because \(\ell_j\to\infty\).
		
		Set
		\[
		\varepsilon_j:=(-1)^{m_j-n_j},
		\]
		and define
		\[
		A_j:=A_{m_j}(\ell_j)=\sigma_{m_j}(\ell_j)-q,
		\qquad
		B_j:=B_{m_j}(\ell_j)
		=\overline{\sigma_{-m_j}(\ell_j)}-\overline q.
		\]
		
		We now define a distribution \(u\) by prescribing its Fourier coefficients on
		the symmetric pairs
		\[
		(m_j,n_j)
		\quad\text{and}\quad
		(-m_j,-n_j).
		\]
		Set
		\[
		\widehat u(\ell)_{mn}
		:=
		\begin{cases}
			B_j,
			& \text{if }\ell=\ell_j,\ m=m_j,\ n=n_j, \\[0.3em]
			\varepsilon_j p,
			& \text{if }\ell=\ell_j,\ m=-m_j,\ n=-n_j, \\[0.3em]
			0,
			& \text{otherwise}.
		\end{cases}
		\]
		
		Since the symbol of \(L\) has polynomial growth, the sequence
		\((B_j)_{j\in\N}\) has at most polynomial growth. Hence this family of Fourier
		coefficients defines a distribution
		\[
		u\in\DD(\mathbb S^3).
		\]
		
		The distribution \(u\) is not smooth. Indeed,
		\[
		|\widehat u(\ell_j)_{-m_j,-n_j}|
		=
		|\varepsilon_j p|
		=
		|p|\neq0
		\]
		for every \(j\), while \(\ell_j\to\infty\). Hence the Fourier coefficients of
		\(u\) are not rapidly decreasing.
		
		We now compute \(Pu\). At the entry \((m_j,n_j)\), using \eqref{eqn1}, we have
		\begin{align*}
			\widehat{Pu}(\ell_j)_{m_jn_j}
			&=
			A_j\,\widehat u(\ell_j)_{m_jn_j}
			-
			p\,\varepsilon_j
			\overline{\widehat u(\ell_j)_{-m_j,-n_j}}
			\\
			&=
			A_jB_j
			-
			p\,\varepsilon_j\overline{\varepsilon_jp}
			\\
			&=
			A_jB_j-|p|^2
			\\
			&=
			\Delta_{m_j}(\ell_j).
		\end{align*}
		
		At the symmetric entry \((-m_j,-n_j)\), using the second equation of the
		corresponding system, we obtain
		\[
		(-1)^{m_j-n_j}
		\overline{\widehat{Pu}(\ell_j)_{-m_j,-n_j}}
		=
		-\overline p\,B_j+B_j\overline p
		=
		0.
		\]
		
		For every other entry, both the corresponding coefficient of \(u\) and the
		coefficient at its symmetric entry vanish by construction. Thus the only
		possibly nonzero Fourier coefficients of \(Pu\) along this sequence are
		\[
		\widehat{Pu}(\ell_j)_{m_jn_j}
		=
		\Delta_{m_j}(\ell_j).
		\]
		
		These coefficients are rapidly decreasing. Indeed, for every \(N>0\), if
		\(j\geq N\), then
		\[
		|\Delta_{m_j}(\ell_j)|
		<
		\jp{\ell_j}^{-j}
		\leq
		\jp{\ell_j}^{-N}.
		\]
		
		The finitely many terms with \(j<N\) are absorbed into the constant in the
		rapid-decay estimate. Hence the Fourier coefficients of \(Pu\) decay faster
		than any negative power of \(\jp{\ell}\), and therefore \(Pu\in C^\infty(\mathbb S^3).\)
		
		Thus \(u\in\DD(\mathbb S^3)\setminus C^\infty(\mathbb S^3)\) and
		\(Pu\in C^\infty(\mathbb S^3).\) Therefore \(P\) is not globally hypoelliptic.
	\end{proof}
	
	\subsection{Global Solvability} 
	\label{Subsection_S3_GS} \

	We now turn to global solvability, keeping the notation introduced in the
	previous subsection. In particular, \(\Delta_m(\ell)\) denotes the determinant
	of the Fourier-side system \eqref{eqn1}--\eqref{eqn2}, and we write
	\[
	A_m(\ell):=\sigma_m(\ell)-q,
	\qquad
	B_m(\ell):=\overline{\sigma_{-m}(\ell)}-\overline q.
	\]
	
	When \(\Delta_m(\ell)=0\), the system cannot be inverted, and the datum must
	satisfy a compatibility condition. This leads to the following admissible space.
	
	\begin{definition}\label{A_S3}
		We denote by \(\mathcal A_{\mathbb S^3}\) the space of all distributions
		\(f\in\DD(\mathbb S^3)\) such that, for every
		\(\ell\in\frac12\N_0\), every \(m\in J_\ell\) with
		\(\Delta_m(\ell)=0\), and every \(n\in J_\ell\), one has
		\[
		B_m(\ell)\widehat f(\ell)_{mn}
		+
		p(-1)^{m-n}\overline{\widehat f(\ell)_{-m,-n}}
		=
		0.
		\]
	\end{definition}
	
	\begin{definition}
		We say that \(P\) is globally solvable on \(\mathbb S^3\) if
		\[
		P(\DD(\mathbb S^3))=\mathcal A_{\mathbb S^3}.
		\]
	\end{definition}
	
	Away from the zero set of \(\Delta_m(\ell)\), solvability requires quantitative
	control of the inverse of the Fourier-side system. This is encoded in the
	following analogue of condition \emph{(DC')}.
	
	\begin{definition}\label{DCprime_S3}
		We say that \(P\) satisfies condition \emph{(DC'\(_{\mathbb S^3}\))} if
		there exist constants \(C>0\) and \(M>0\) such that
		\[
		|\Delta_m(\ell)|
		\geq
		C\jp{\ell}^{-M},
		\]
		for all \(\ell\in\frac12\N_0\) and all \(m\in J_\ell\) such that
		\[
		\Delta_m(\ell)\neq0.
		\]
	\end{definition}
	
	\begin{theorem}\label{GS_SU2_characterization}
		Let \(L:\DD(\mathbb S^3)\to\DD(\mathbb S^3)\) be a continuous
		left-invariant operator preserving smooth functions. Assume that its symbol
		is diagonal in the standard Fourier basis of \(SU(2)\) and has polynomial
		growth. Let \(p,q\in\C\), with \(p\neq0\), and consider
		\[
		Pu=Lu-q\,u-p\,\overline u.
		\]
		Then \(P\) is globally solvable on \(\mathbb S^3\) if and only if it
		satisfies condition \emph{(DC'\(_{\mathbb S^3}\))}.
	\end{theorem}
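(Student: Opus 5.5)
The plan is to follow the proof of Theorem~\ref{main_GS}. The conjugate pair \([\xi],[\overline\xi]\) is replaced by the symmetric pair of entries \((m,n),(-m,-n)\) inside a single block \(t^\ell\). The basic algebraic fact, which I will record first, is the identity
\[
A_{-m}(\ell)=\overline{B_m(\ell)},\qquad B_{-m}(\ell)=\overline{A_m(\ell)},\qquad\text{hence}\qquad \Delta_{-m}(\ell)=\overline{\Delta_m(\ell)}.
\]
This makes the zero set invariant under \(m\mapsto-m\), and it guarantees that the Cramer formulas \eqref{cramer_S3_GH} written at \((m,n)\) and at \((-m,-n)\) are mutually consistent. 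Necessity of \(f\in\mathcal A_{\mathbb S^3}\) for \(f\in P(\DD(\mathbb S^3))\) holds by the derivation of Definition~\ref{A_S3}: multiplying the system by its adjugate gives the identity \eqref{cramer_S3_GH} without any nonvanishing assumption, so its right-hand side must vanish when \(\Delta_m(\ell)=0\).

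\emph{Sufficiency.} Assume (DC'\(_{\mathbb S^3}\)) and let \(f\in\mathcal A_{\mathbb S^3}\). I will define \(\widehat u(\ell)_{mn}\) entrywise in three cases.

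When \(\Delta_m(\ell)\neq0\), set \(\widehat u(\ell)_{mn}:=\Delta_m(\ell)^{-1}\big(B_m(\ell)\widehat f(\ell)_{mn}+p(-1)^{m-n}\overline{\widehat f(\ell)_{-m,-n}}\big)\). Using the identity above, I will check that this pair of values solves both \eqref{eqn1} and \eqref{eqn2}, exactly as in Proposition~\ref{GSsufficient}.

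When \(\Delta_m(\ell)=0\) and \((m,n)\neq(-m,-n)\), choose one representative of each symmetric pair. At the representative set \(\widehat u(\ell)_{mn}:=0\) and \(\widehat u(\ell)_{-m,-n}:=-(-1)^{m-n}\overline{\widehat f(\ell)_{mn}}/\overline p\). Then \eqref{eqn1} at \((m,n)\) holds trivially. The equation at \((-m,-n)\) reduces, after conjugation, to the compatibility condition, because \(\Delta_{-m}(\ell)=0\) as well.

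In the fixed-point case \(m=n=0\) with \(\Delta_0(\ell)=0\), we have \(B_0=\overline{A_0}\) and \(|A_0|=|p|\neq0\). I will take \(\widehat u(\ell)_{00}:=\widehat f(\ell)_{00}/(2A_0(\ell))\). A one-line check using the compatibility relation \(\overline{A_0}\widehat f_{00}+p\overline{\widehat f_{00}}=0\) gives \(A_0x-p\overline x=f_{00}\) for this choice \(x\).

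For the growth bounds, (DC'\(_{\mathbb S^3}\)) together with the polynomial growth of \(\sigma\) and of \(\widehat f\) gives a bound \(\jp{\ell}^{M+K+N_f}\) in the first case. In the zero cases the coefficients are bounded by a multiple of \(|\widehat f|/|p|\). Hence \(u\in\DD(\mathbb S^3)\) and \(Pu=f\).

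\emph{Necessity.} If (DC'\(_{\mathbb S^3}\)) fails, there are \(\ell_j\) and \(m_j\) with \(0<|\Delta_{m_j}(\ell_j)|<\jp{\ell_j}^{-j}\). Each block has only finitely many \(m\), so \(\ell_j\to\infty\); after passing to a subsequence I take the \(\ell_j\) distinct. As in the proof of Theorem~\ref{GH_SU2_characterization}, I pick \(n_j\) with \((m_j,n_j)\neq(-m_j,-n_j)\) and set \(\varepsilon_j=(-1)^{m_j-n_j}\).

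Let \(f\) have the single nonzero coefficient \(\widehat f(\ell_j)_{-m_j,-n_j}:=\varepsilon_j/\overline p\) in each block \(\ell_j\). The family is bounded, so \(f\in\DD(\mathbb S^3)\). To see that \(f\in\mathcal A_{\mathbb S^3}\), note that a compatibility condition involves \(\widehat f(\ell)_{-m_j,-n_j}\) only at row index \(m=\pm m_j\) in block \(\ell_j\). For those indices the determinant is \(\Delta_{m_j}(\ell_j)\) or its conjugate, which are nonzero, so no condition is imposed there.

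If \(Pu=f\), then \eqref{cramer_S3_GH} at \((\ell_j,m_j,n_j)\) gives \(\Delta_{m_j}(\ell_j)\widehat u(\ell_j)_{m_jn_j}=p\varepsilon_j\overline{\varepsilon_j/\overline p}=1\). Thus \(|\widehat u(\ell_j)_{m_jn_j}|>\jp{\ell_j}^{j}\), which contradicts the polynomial growth of the coefficients of a distribution.

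The main obstacle I anticipate is bookkeeping rather than analysis. The issues are the consistency of the pairwise definitions (which is why \(\Delta_{-m}=\overline{\Delta_m}\) is proved first), and the fixed entry \((0,0)\), where the system is only real-linear and the explicit choice \(f/(2A_0)\) must be verified against the compatibility condition.
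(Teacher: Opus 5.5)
Your proposal is correct and is essentially the paper's own proof. It uses the same three-case construction of \(\widehat u(\ell)_{mn}\): Cramer's formula off the zero set, the values \(0\) and \(-(-1)^{m-n}\overline{\widehat f(\ell)_{mn}}/\overline p\) on a symmetric pair inside the zero set, and \(\widehat f(\ell)_{00}/(2A_0(\ell))\) at the fixed entry. It also uses the same obstructing datum \(\widehat f(\ell_j)_{-m_j,-n_j}=\varepsilon_j/\overline p\). You are slightly more explicit than the paper on two points: the consistency of the Cramer definitions at \((m,n)\) and \((-m,-n)\) via \(\Delta_{-m}(\ell)=\overline{\Delta_m(\ell)}\), and the inclusion \(P(\DD(\mathbb S^3))\subset\mathcal A_{\mathbb S^3}\) via the adjugate identity.

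One inference needs a small repair. The step ``each block has only finitely many \(m\), so \(\ell_j\to\infty\)'' does not follow from \(0<|\Delta_{m_j}(\ell_j)|<\jp{\ell_j}^{-j}\) alone. Since \(\jp{0}=1\), the constant sequence \(\ell_j=0\) satisfies these inequalities whenever \(0<|\Delta_0(0)|<1\). One fix is to negate \emph{(DC'\(_{\mathbb S^3}\))} with \(C=1/j\) and \(M=j\): a pair \((\ell,m)\) recurring infinitely often would then force \(\Delta_m(\ell)=0\). Alternatively, argue as the paper does that a failure confined to finitely many levels could be absorbed into the constant.
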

	
	\begin{proof}
		We first prove that condition \emph{(DC'\(_{\mathbb S^3}\))} implies global
		solvability. Let \(f\in\mathcal A_{\mathbb S^3}\). We shall construct
		\(u\in\DD(\mathbb S^3)\) such that
		\[
		Pu=f.
		\]
		
		We define the Fourier coefficients of \(u\) representation by representation.
		For each fixed \(\ell\in\frac12\N_0\), we prescribe the entries of the matrix
		\[
		\widehat u(\ell)=\big(\widehat u(\ell)_{mn}\big)_{m,n\in J_\ell}.
		\]
		
		Fix \(\ell\in\frac12\N_0\) and \(m,n\in J_\ell\). If
		\[
		\Delta_m(\ell)\neq0,
		\]
		we define
		\[
		\widehat u(\ell)_{mn}
		:=
		\frac{
			B_m(\ell)\widehat f(\ell)_{mn}
			+
			p(-1)^{m-n}\overline{\widehat f(\ell)_{-m,-n}}
		}{
			\Delta_m(\ell)
		}.
		\]
		
		This is the first component of the solution of the corresponding invertible
		\(2\times2\) system.
		
		We now consider the singular case
		\[
		\Delta_m(\ell)=0.
		\]
		
		First assume that
		\[
		(m,n)\neq(0,0).
		\]
		
		Then the two entries
		\[
		(m,n)
		\quad\text{and}\quad
		(-m,-n)
		\]
		are distinct. For each orbit of the involution
		\[
		(m,n)\mapsto(-m,-n)
		\]
		contained in the zero set, we choose one representative. If \((m,n)\) is the
		chosen representative, we set
		\[
		\widehat u(\ell)_{mn}:=0,
		\]
		and define the coefficient at the symmetric entry by
		\[
		\widehat u(\ell)_{-m,-n}
		:=
		-\frac{(-1)^{m-n}}{\overline p}
		\overline{\widehat f(\ell)_{mn}}.
		\]
		
		With this choice, the equation at the entry \((m,n)\) is satisfied. Indeed,
		\begin{align*}
			A_m(\ell)\widehat u(\ell)_{mn}
			-
			p(-1)^{m-n}\overline{\widehat u(\ell)_{-m,-n}}
			&=
			-p(-1)^{m-n}
			\overline{
				-\frac{(-1)^{m-n}}{\overline p}
				\overline{\widehat f(\ell)_{mn}}
			}
			\\
			&=
			\widehat f(\ell)_{mn}.
		\end{align*}
		
		For the symmetric entry \((-m,-n)\), since
		\[
		\Delta_{-m}(\ell)=\overline{\Delta_m(\ell)}=0,
		\]
		the compatibility condition for \(f\in\mathcal A_{\mathbb S^3}\) gives
		\[
		B_m(\ell)\widehat f(\ell)_{mn}
		+
		p(-1)^{m-n}\overline{\widehat f(\ell)_{-m,-n}}
		=
		0.
		\]

		Equivalently, after taking complex conjugates,
		\[
		A_{-m}(\ell)\overline{\widehat f(\ell)_{mn}}
		+
		\overline p\,(-1)^{m-n}\widehat f(\ell)_{-m,-n}
		=
		0.
		\]
		Thus
		\[
		A_{-m}(\ell)
		\left(
		-\frac{(-1)^{m-n}}{\overline p}
		\overline{\widehat f(\ell)_{mn}}
		\right)
		=
		\widehat f(\ell)_{-m,-n}.
		\]
		Since \(\widehat u(\ell)_{mn}=0\), this is exactly the equation at the
		symmetric entry.
		
		It remains to consider the fixed point
		\[
		(m,n)=(0,0).
		\]
		This case can occur only when \(\ell\in\N_0\). If \(\Delta_0(\ell)\neq0\), it
		has already been treated by the invertible case. Assume therefore that
		\[
		\Delta_0(\ell)=0.
		\]
		The equation for the coefficient \(\widehat u(\ell)_{00}\) becomes the scalar
		real-linear equation
		\[
		A_0(\ell)z-p\overline z=\widehat f(\ell)_{00}.
		\]
		Since
		\[
		\Delta_0(\ell)
		=
		A_0(\ell)\overline{A_0(\ell)}-|p|^2
		=
		0,
		\]
		we have
		\[
		|A_0(\ell)|=|p|\neq0.
		\]
		Moreover, because \(f\in\mathcal A_{\mathbb S^3}\), the compatibility
		condition at the entry \((0,0)\) gives
		\[
		\overline{A_0(\ell)}\,\widehat f(\ell)_{00}
		+
		p\,\overline{\widehat f(\ell)_{00}}
		=
		0.
		\]
		We define
		\[
		\widehat u(\ell)_{00}
		:=
		\frac{\widehat f(\ell)_{00}}{2A_0(\ell)}.
		\]
		Then
		\begin{align*}
			A_0(\ell)\widehat u(\ell)_{00}
			-
			p\overline{\widehat u(\ell)_{00}}
			&=
			\frac{\widehat f(\ell)_{00}}{2}
			-
			\frac{p\,\overline{\widehat f(\ell)_{00}}}{2\overline{A_0(\ell)}}.
		\end{align*}
		By the compatibility condition,
		\[
		p\,\overline{\widehat f(\ell)_{00}}
		=
		-\overline{A_0(\ell)}\,\widehat f(\ell)_{00}.
		\]
		Hence
		\[
		A_0(\ell)\widehat u(\ell)_{00}
		-
		p\overline{\widehat u(\ell)_{00}}
		=
		\widehat f(\ell)_{00}.
		\]
		
		We have therefore defined Fourier coefficients satisfying the equation
		\(Pu=f\) at every matrix entry. It remains to prove that these coefficients
		define a distribution.
		
		Since \(f\in\DD(\mathbb S^3)\), there exist constants \(C_f>0\) and
		\(N_f>0\) such that
		\[
		|\widehat f(\ell)_{mn}|
		\leq
		C_f\jp{\ell}^{N_f}
		\]
		for all \(\ell\in\frac12\N_0\) and all \(m,n\in J_\ell\).
		
		In the invertible case, condition \emph{(DC'\(_{\mathbb S^3}\))} gives
		\[
		|\Delta_m(\ell)|^{-1}
		\leq
		C^{-1}\jp{\ell}^{M}
		\]
		for some constants \(C>0\) and \(M>0\). Since the symbol of \(L\) has
		polynomial growth, there exist constants \(C_B>0\) and \(K\in\N_0\) such that
		\[
		|B_m(\ell)|\leq C_B\jp{\ell}^{K}.
		\]
		Therefore
		\[
		|\widehat u(\ell)_{mn}|
		\leq
		C'\jp{\ell}^{N_f+M+K}.
		\]
		
		If \(\Delta_m(\ell)=0\) and \((m,n)\neq(0,0)\), the definition gives
		\[
		|\widehat u(\ell)_{-m,-n}|
		\leq
		\frac{1}{|p|}|\widehat f(\ell)_{mn}|
		\leq
		C'\jp{\ell}^{N_f}.
		\]
		Finally, in the fixed case \((m,n)=(0,0)\) with
		\(\Delta_0(\ell)=0\), using \(|A_0(\ell)|=|p|\neq0\), we obtain
		\[
		|\widehat u(\ell)_{00}|
		\leq
		\frac{1}{2|p|}
		|\widehat f(\ell)_{00}|
		\leq
		C'\jp{\ell}^{N_f}.
		\]
		
		Thus the family \(\widehat u(\ell)_{mn}\) has at most polynomial growth. By
		the Fourier characterization of distributions on \(\mathbb S^3\), it defines
		a distribution
		\[
		u\in\DD(\mathbb S^3).
		\]
		By construction, the Fourier coefficients of \(Pu\) and \(f\) agree for every
		\(\ell\in\frac12\N_0\) and every \(m,n\in J_\ell\). Hence
		\[
		Pu=f.
		\]
		Since \(f\in\mathcal A_{\mathbb S^3}\) was arbitrary, \(P\) is globally
		solvable.
		
		Conversely, suppose that condition \emph{(DC'\(_{\mathbb S^3}\))} does not
		hold. We shall prove, by contraposition, that \(P\) is not globally solvable.
		
		Since \emph{(DC'\(_{\mathbb S^3}\))} fails, there exist a sequence
		\(\ell_j\to\infty\) and indices \(m_j\in J_{\ell_j}\) such that
		\[
		0<
		|\Delta_{m_j}(\ell_j)|
		<
		\jp{\ell_j}^{-j},
		\qquad j\in\N.
		\]
		Indeed, if no such high-frequency sequence existed, then the nonzero values
		of \(\Delta_m(\ell)\) would satisfy a polynomial lower bound after adjusting
		the constant on finitely many low-frequency levels.
		
		Passing to a subsequence if necessary, we choose \(n_j\in J_{\ell_j}\) such
		that
		\[
		(m_j,n_j)\neq(-m_j,-n_j)
		\]
		for every \(j\). This is possible because \(\ell_j\to\infty\).
		
		Set
		\[
		\varepsilon_j:=(-1)^{m_j-n_j}.
		\]
		We define a family of Fourier coefficients by
		\[
		\widehat f(\ell)_{mn}
		:=
		\begin{cases}
			\varepsilon_j\,\overline p^{-1},
			& \text{if }\ell=\ell_j,\ m=-m_j,\ n=-n_j, \\[0.3em]
			0,
			& \text{otherwise}.
		\end{cases}
		\]
		This family is bounded, and hence it defines a distribution
		\[
		f\in\DD(\mathbb S^3).
		\]
		
		We claim that \(f\in\mathcal A_{\mathbb S^3}\). The only nonzero Fourier
		coefficients of \(f\) occur at the entries \((-m_j,-n_j)\). For these entries,
		\[
		\Delta_{-m_j}(\ell_j)
		=
		\overline{\Delta_{m_j}(\ell_j)}
		\neq0.
		\]
		Thus the nonzero coefficients of \(f\) do not occur at entries where the
		compatibility condition is imposed. Moreover, their symmetric partners are
		the entries \((m_j,n_j)\), and
		\[
		\Delta_{m_j}(\ell_j)\neq0.
		\]
		Hence the second term in the compatibility condition also cannot detect a
		nonzero coefficient of \(f\) at a zero of the determinant. Therefore no
		compatibility condition associated with a zero of \(\Delta_m(\ell)\) is
		violated, and
		\[
		f\in\mathcal A_{\mathbb S^3}.
		\]
		
		Suppose, by contradiction, that there exists \(u\in\DD(\mathbb S^3)\) such
		that
		\[
		Pu=f.
		\]
		Applying Cramer's rule to the entry \((m_j,n_j)\), where
		\(\Delta_{m_j}(\ell_j)\neq0\), gives
		\begin{align*}
			\Delta_{m_j}(\ell_j)\widehat u(\ell_j)_{m_jn_j}
			&=
			B_{m_j}(\ell_j)\widehat f(\ell_j)_{m_jn_j}
			+
			p\varepsilon_j
			\overline{\widehat f(\ell_j)_{-m_j,-n_j}}
			\\
			&=
			0+
			p\varepsilon_j\,
			\overline{\varepsilon_j\,\overline p^{-1}}
			\\
			&=
			1.
		\end{align*}
		Consequently,
		\[
		|\widehat u(\ell_j)_{m_jn_j}|
		=
		\frac{1}{|\Delta_{m_j}(\ell_j)|}
		>
		\jp{\ell_j}^{j}.
		\]
		This growth is faster than any polynomial in \(\jp{\ell_j}\), contradicting
		the Fourier characterization of distributions on \(\mathbb S^3\).
		
		Therefore no distributional solution exists for this admissible datum
		\(f\in\mathcal A_{\mathbb S^3}\). Hence \(P\) is not globally solvable. This
		proves the contraposition and completes the proof.
	\end{proof}
	
	As in the non-self-dual case, the proof of the sufficiency part also yields
	smooth solvability for smooth admissible data. If
	\(f\in\mathcal A_{\mathbb S^3}\cap C^\infty(\mathbb S^3)\), then the
	coefficients constructed above are rapidly decreasing. In the invertible case,
	condition \emph{(DC'\(_{\mathbb S^3}\))} gives only a polynomial loss. On the
	zero set of \(\Delta_m(\ell)\), the coefficients are either zero or constant
	multiples of Fourier coefficients of \(f\); in the fixed case \((0,0)\), the
	factor \(A_0(\ell)\) satisfies \(|A_0(\ell)|=|p|\neq0\). Hence the solution
	belongs to \(C^\infty(\mathbb S^3)\).
	
	\begin{corollary}\label{smooth_GS_S3}
		Under condition \emph{(DC'\(_{\mathbb S^3}\))}, the operator \(P\) is
		globally solvable in the smooth category. More precisely,
		\[
		P(C^\infty(\mathbb S^3))
		=
		\mathcal A_{\mathbb S^3}\cap C^\infty(\mathbb S^3).
		\]
	\end{corollary}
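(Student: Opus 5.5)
The equality is proved by establishing the two inclusions separately.

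\emph{First inclusion.} For \(P(C^\infty(\mathbb S^3))\subset\mathcal A_{\mathbb S^3}\cap C^\infty(\mathbb S^3)\), take \(u\) smooth.
\begin{itemize}
\item \(Pu\) is smooth, since \(L\) preserves smooth functions and conjugation preserves \(C^\infty\).
\item \(Pu\) is admissible: the Fourier coefficients of \(f=Pu\) satisfy the \(2\times2\) system built from \eqref{eqn1}--\eqref{eqn2}. Eliminating the second unknown \((-1)^{m-n}\overline{\widehat u(\ell)_{-m,-n}}\) yields the identity \(\Delta_m(\ell)\widehat u(\ell)_{mn}=B_m(\ell)\widehat f(\ell)_{mn}+p(-1)^{m-n}\overline{\widehat f(\ell)_{-m,-n}}\). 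This is the analogue of \eqref{cramer} and holds whether or not \(\Delta_m(\ell)\) vanishes. At a zero of \(\Delta_m(\ell)\) the left-hand side is zero, which is exactly the condition in Definition~\ref{A_S3}.
\end{itemize}
For the fixed entry \((0,0)\), the same elimination applies to the real-linear equation \(A_0z-p\overline z=\widehat f_{00}\): multiply by \(\overline{A_0}\), add \(p\) times the conjugate equation, and use \(B_0=\overline{A_0}\).

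\emph{Second inclusion.} Let \(f\in\mathcal A_{\mathbb S^3}\cap C^\infty(\mathbb S^3)\). Take the solution \(u\) constructed in the sufficiency part of the proof of Theorem~\ref{GS_SU2_characterization}; it satisfies \(Pu=f\). It remains to show that \(\widehat u(\ell)_{mn}\) is rapidly decreasing. Fix \(N>0\) and use rapid decay of \(\widehat f\) at order \(N+M+K\), exactly as in the proof of Theorem~\ref{GH_SU2_characterization}. Then check each case of the construction:
\begin{enumerate}
\item \emph{Invertible case} \(\Delta_m(\ell)\neq0\). Condition (DC'\(_{\mathbb S^3}\)) gives \(|\Delta_m(\ell)|^{-1}\le C^{-1}\jp{\ell}^M\), and \(|B_m(\ell)|\le C_B\jp{\ell}^K\). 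Hence \(|\widehat u(\ell)_{mn}|\lesssim \jp{\ell}^{M+K}\cdot\jp{\ell}^{-N-M-K}=\jp{\ell}^{-N}\).
\item \emph{Singular case} with \((m,n)\neq(0,0)\). The coefficients are either \(0\) or \(-\frac{(-1)^{m-n}}{\overline p}\overline{\widehat f(\ell)_{mn}}\), so they are bounded by \(|p|^{-1}|\widehat f|\).
\item \emph{Fixed point} \((0,0)\) with \(\Delta_0(\ell)=0\). Here \(\widehat u(\ell)_{00}=\widehat f(\ell)_{00}/(2A_0(\ell))\) with \(|A_0(\ell)|=|p|\), so it is bounded by \((2|p|)^{-1}|\widehat f(\ell)_{00}|\).
\end{enumerate}
Combining the three cases with one constant gives \(|\widehat u(\ell)_{mn}|\le C_N'\jp{\ell}^{-N}\) uniformly in \(\ell\) and \(m,n\in J_\ell\). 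Hence \(u\in C^\infty(\mathbb S^3)\), which gives the reverse inclusion.

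The only delicate point is the first inclusion at the fixed entry \((0,0)\), where the system degenerates to a single real-linear scalar equation. There I would redo the elimination explicitly rather than invoke Cramer's rule.
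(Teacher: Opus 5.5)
Your proposal is correct and follows the paper's proof. The reverse inclusion reuses the construction from Theorem~\ref{GS_SU2_characterization} and checks rapid decay in three cases: the invertible case, the paired singular case, and the fixed point $(0,0)$, exactly as the paper does. Your elimination identity $\Delta_m(\ell)\widehat u(\ell)_{mn}=B_m(\ell)\widehat f(\ell)_{mn}+p(-1)^{m-n}\overline{\widehat f(\ell)_{-m,-n}}$ holds even when $\Delta_m(\ell)=0$, and also at $(0,0)$, where \eqref{eqn2} is simply the conjugate of \eqref{eqn1}, so that entry is not actually delicate. It makes explicit the admissibility of $Pu$, which the paper only asserts as following from the definition of $\mathcal A_{\mathbb S^3}$.
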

	
	\begin{proof}
		The inclusion
		\[
		P(C^\infty(\mathbb S^3))
		\subset
		\mathcal A_{\mathbb S^3}\cap C^\infty(\mathbb S^3)
		\]
		follows from the definition of the admissible space
		\(\mathcal A_{\mathbb S^3}\) and from the fact that \(P\) preserves smooth
		functions.
		
		Conversely, let
		\[
		f\in\mathcal A_{\mathbb S^3}\cap C^\infty(\mathbb S^3).
		\]
		
		By the construction in the proof of Theorem~\ref{GS_SU2_characterization},
		there exists \(u\in\DD(\mathbb S^3)\) such that \(Pu=f\). Since \(f\) is
		smooth, its Fourier coefficients are rapidly decreasing. In the invertible
		case, condition \emph{(DC'\(_{\mathbb S^3}\))} causes only a polynomial loss.
		On the zero set of \(\Delta_m(\ell)\), the constructed coefficients of \(u\)
		are either zero or constant multiples of Fourier coefficients of \(f\). In
		the fixed case \((m,n)=(0,0)\), the factor \(A_0(\ell)\) satisfies
		\[
		|A_0(\ell)|=|p|\neq0,
		\]
		and therefore no additional loss occurs. Hence the Fourier coefficients of
		\(u\) are rapidly decreasing, so
		\[
		u\in C^\infty(\mathbb S^3).
		\]
		
		Thus
		\[
		\mathcal A_{\mathbb S^3}\cap C^\infty(\mathbb S^3)
		\subset
		P(C^\infty(\mathbb S^3)).
		\]
		This proves the equality.
	\end{proof}
	
	\section{Examples and Remarks}
	\label{Section_examples}
	
	We conclude with examples illustrating the criteria obtained in the previous
	sections. The first example is posed on \(\mathbb S^3\simeq SU(2)\) and uses the
	self-dual analysis developed in Section~\ref{Section_S3}. The remaining examples
	are posed on the product group \(\mathbb S^3\times\mathbb T^1\), where the
	determinant criteria can be checked explicitly.
	
	\begin{example}
		We first exhibit a family of Vekua-type operators on \(\mathbb S^3\) for
		which the determinant \(\Delta_m(\ell)\) vanishes along infinitely many
		representation levels.
		
		Let \(r\) be a positive even integer, and let \(p,q\in\C\) satisfy
		\[
		\Re(q)=1,
		\qquad
		|q|=|p|.
		\]
		
		Choose \(a>0\) such that
		\[
		m_0:=\left(\frac{2}{a}\right)^{1/r}\in\frac12\mathbb Z.
		\]
		
		Consider
		\[
		L=a\partial_0^r,
		\qquad
		Pu=a\partial_0^r u-q\,u-p\,\overline u.
		\]
		
		Since \(r\) is even, the symbol of \(L\) satisfies
		\[
		\sigma_m(\ell)=am^r,
		\qquad
		\sigma_{-m}(\ell)=a(-m)^r=am^r.
		\]

		Therefore, using the self-dual determinant from Section~\ref{Section_S3}, we
		obtain
		\begin{align*}
			\Delta_m(\ell)
			&=
			(\sigma_m(\ell)-q)
			\big(\overline{\sigma_{-m}(\ell)}-\overline q\big)
			-|p|^2
			\\
			&=
			(am^r-q)(am^r-\overline q)-|p|^2
			\\
			&=
			a^2m^{2r}-2am^r\Re(q)+|q|^2-|p|^2
			\\
			&=
			a^2m^{2r}-2am^r.
		\end{align*}

		For \(m=m_0\), we have \(am_0^r=2\), and hence
		\[
		\Delta_{m_0}(\ell)=0.
		\]
		
		Since \(m_0\in J_\ell\) for infinitely many
		\(\ell\in\frac12\N_0\), the zero set \(\mathcal Z_{\mathbb S^3}\) is
		infinite. By Proposition~\ref{ZinfiniteSU2}, \(P\) is not globally
		hypoelliptic.
		
		Notice also that the unperturbed operator \(L\) is not globally
		hypoelliptic. Indeed, since \(0\in J_\ell\) for every
		\(\ell\in\N_0\), one has
		\[
		\sigma_0(\ell)=0
		\]
		for infinitely many \(\ell\). 
		
		Thus the symbol of \(L\) has infinitely many
		zeros at high frequencies, which violates the usual lower-bound criterion for
		global hypoellipticity of diagonal operators; see, for instance,
		Theorem~3.2 of \cite{da2025diagonal}.
	\end{example}
	
	\begin{example}
		Let
		\[
		G=\mathbb S^3\times\mathbb T^1.
		\]

		Let \(a,q\in\R\setminus\{0\}\), let \(p\in\C\setminus\{0\}\), and let
		\(r\in\N\) be odd. Assume that
		\[
		q\notin\frac12\mathbb Z
		\qquad\text{and}\qquad
		q\pm |p|\notin\frac12\mathbb Z.
		\]

		Consider
		\[
		L=\partial_0+iaD_t^r,
		\qquad
		Pu=\partial_0u+iaD_t^r u-q\,u-p\,\overline u.
		\]
		
		The unitary dual of \(G\) is parametrized by
		\[
		\widehat G\simeq \frac12\N_0\times\mathbb Z.
		\]
		
		For \((\ell,k)\in\widehat G\) and \(m,n\in J_\ell\), the symbol of \(L\) is
		\[
		\sigma_m(\ell,k)=m+iak^r.
		\]
		
		The conjugate representation of the class \((\ell,k)\) is \((\ell,-k)\).
		Since \(r\) is odd, we have
		\[
		\sigma_m(\ell,-k)
		=
		m-iak^r
		=
		\overline{\sigma_m(\ell,k)}.
		\]
		
		Thus \(L\) is diagonal in the sense of Definition~\ref{diagonal}.
		
		For this operator, the determinant \eqref{discriminant} is
		\begin{align*}
			\Delta_m(\ell,k)
			&=
			(m+iak^r-q)(m+iak^r-\overline q)-|p|^2
			\\
			&=
			(m+iak^r-q)^2-|p|^2
			\\
			&=
			m^2-a^2k^{2r}-2qm+q^2-|p|^2
			+
			2iak^r(m-q),
		\end{align*}
		where we used that \(q\in\R\).
		
		We first consider \(k\neq0\). From the imaginary part of the last expression,
		we obtain
		\[
		|\Delta_m(\ell,k)|
		\geq
		\left|2ak^r(m-q)\right|
		\geq
		2|a|\,|m-q|.
		\]
		
		Since \(q\notin\frac12\mathbb Z\), the distance from \(q\) to
		\(\frac12\mathbb Z\) is positive. Hence there exists \(c_1>0\) such that
		\[
		|m-q|\geq c_1
		\]
		for every \(m\in\frac12\mathbb Z\). Therefore,
		\[
		|\Delta_m(\ell,k)|
		\geq
		2|a|c_1,
		\qquad k\neq0.
		\]
		
		Now let \(k=0\). Then
		\[
		\Delta_m(\ell,0)
		=
		(m-q)^2-|p|^2.
		\]
		
		Thus
		\[
		\Delta_m(\ell,0)=0
		\quad\Longleftrightarrow\quad
		m=q\pm |p|.
		\]
		
		By assumption,
		\[
		q\pm |p|\notin\frac12\mathbb Z,
		\]
		so \(\Delta_m(\ell,0)\neq0\) for every \(m\in\frac12\mathbb Z\). Moreover,
		since
		\[
		|(m-q)^2-|p|^2|\to\infty
		\qquad\text{as } |m|\to\infty,
		\]
		there exists \(C_2>0\) such that
		\[
		|\Delta_m(\ell,0)|\geq C_2
		\]
		for every \(m\in\frac12\mathbb Z\).
		
		Setting
		\[
		C:=\min\{2|a|c_1,C_2\}>0,
		\]
		we obtain
		\[
		|\Delta_m(\ell,k)|\geq C
		\]
		for all \((\ell,k)\in\widehat G\) and all \(m\in J_\ell\). 
		
		Hence \(P\)
		satisfies condition \emph{(DC)}. By Proposition~\ref{firstprop}, the
		Vekua-type operator \(P\) is globally hypoelliptic.
		
		In contrast, the unperturbed operator \(L\) is not globally hypoelliptic.
		Indeed, for \(k=0\) and \(m=0\), which occurs for every
		\(\ell\in\N_0\), one has
		\[
		\sigma_0(\ell,0)=0.
		\]
		Thus the symbol of \(L\) has infinitely many zeros at high frequencies, which
		violates the usual lower-bound criterion for global hypoellipticity of
		diagonal operators; see, for instance, Theorem~3.2 of
		\cite{da2025diagonal}. This example shows that a Vekua-type perturbation can
		restore global hypoellipticity.
	\end{example}
	
	\begin{example}
		We now give an example of a Vekua-type operator that is globally solvable
		but not globally hypoelliptic.
		
		Let
		\[
		G=\mathbb S^3\times\mathbb T^1.
		\]
		Let \(a\in\R\setminus\{0\}\), let \(q\in i\R\setminus\{0\}\), and let
		\(p\in\C\setminus\{0\}\) satisfy
		\[
		|p|=|q|.
		\]
		Consider
		\[
		L=\partial_0+iaD_t,
		\qquad
		Pu=\partial_0u+iaD_tu-q\,u-p\,\overline u.
		\]
		
		The unitary dual of \(G\) is parametrized by
		\[
		\widehat G\simeq \frac12\N_0\times\mathbb Z.
		\]
		For \((\ell,k)\in\widehat G\) and \(m,n\in J_\ell\), the symbol of \(L\) is
		\[
		\sigma_m(\ell,k)=m+iak.
		\]
		Since
		\[
		\sigma_m(\ell,-k)=m-iak=\overline{\sigma_m(\ell,k)},
		\]
		the operator \(L\) is diagonal in the sense of Definition~\ref{diagonal}.
		
		Writing \(q=ib\), with \(b\in\R\setminus\{0\}\), and using \(|p|=|q|=|b|\),
		we obtain
		\begin{align*}
			\Delta_m(\ell,k)
			&=
			(m+iak-ib)(m+iak+ib)-|p|^2
			\\
			&=
			(m+iak)^2+b^2-|p|^2
			\\
			&=
			(m+iak)^2.
		\end{align*}
		In particular,
		\[
		\Delta_0(\ell,0)=0
		\]
		for every \(\ell\in\N_0\). These zeros occur along the self-dual classes
		\((\ell,0)\). Therefore Proposition~\ref{Zinfinite} does not apply directly,
		since that result concerns infinitely many zeros along non-self-dual classes.
		Instead, repeating the construction used in Proposition~\ref{ZinfiniteSU2} on
		the zero toroidal frequencies, one obtains a distribution
		\(u\in\DD(G)\setminus C^\infty(G)\) such that
		\[
		Pu=0.
		\]
		Hence \(P\) is not globally hypoelliptic.
		
		We now verify the corresponding condition \emph{(DC')}. Since
		\[
		\Delta_m(\ell,k)=(m+iak)^2,
		\]
		we have
		\[
		|\Delta_m(\ell,k)|
		=
		|m+iak|^2
		=
		m^2+a^2k^2.
		\]
		Moreover,
		\[
		\Delta_m(\ell,k)=0
		\quad\Longleftrightarrow\quad
		m=0
		\text{ and }
		k=0.
		\]
		Thus, if \(\Delta_m(\ell,k)\neq0\), then either \(k\neq0\), or
		\(k=0\) and \(m\neq0\).
		
		If \(k\neq0\), then
		\[
		|\Delta_m(\ell,k)|
		=
		m^2+a^2k^2
		\geq
		a^2.
		\]
		If \(k=0\) and \(m\neq0\), then \(m\in\frac12\mathbb Z\setminus\{0\}\), and
		therefore
		\[
		|\Delta_m(\ell,0)|
		=
		m^2
		\geq
		\frac14.
		\]
		Consequently, setting
		\[
		C:=\min\left\{a^2,\frac14\right\}>0,
		\]
		we obtain
		\[
		|\Delta_m(\ell,k)|\geq C
		\]
		for every \((\ell,k)\in\widehat G\) and every \(m\in J_\ell\) such that
		\(\Delta_m(\ell,k)\neq0\).
		
		Thus condition \emph{(DC')} holds. In this explicit product setting, the
		Fourier-side construction used in the solvability proofs above applies
		directly: away from the zero set one solves by dividing by
		\(\Delta_m(\ell,k)\), while on the zero set one imposes the corresponding
		algebraic compatibility condition. Hence \(P\) is globally solvable on the
		natural admissible space.
		
		Therefore \(P\) is globally solvable but not globally hypoelliptic.
	\end{example}
	
	\begin{remark}
		The analysis of the self-dual case depends on an explicit description of how
		conjugation acts inside each representation block. For
		\(\mathbb S^3\simeq SU(2)\), this role is played by the identity
		\[
		\overline{t^\ell_{mn}}
		=
		(-1)^{m-n}t^\ell_{-m,-n}.
		\]
		For other compact Lie groups with self-dual irreducible representations, an
		analogous blockwise description would be needed in order to extend the
		self-dual part of the present analysis.
	\end{remark}

	\section*{Acknowledgements}
	
	The authors gratefully acknowledge the support of the National Council for
	Scientific and Technological Development -- CNPq, Brazil, through the 
	grants: 316850/2021-7, 402669/2024-0, and 420814/2025-6.
	The second author is also grateful to Prof. Michael Ruzhansky and to the Ghent 
	Analysis and PDE Center  for their hospitality during his visit to Ghent University 
	from December 2025  to February 2026. The discussions held during this visit 
	contributed to the development of the ideas leading to the present work.

	\bibliographystyle{amsplain}
	\bibliography{references}
	
\end{document}